\newtheorem{defn}{Definition}[section]
\newtheorem{thm}{Theorem}[section]
\newtheorem{prop}{Proposition}[section]
\newtheorem{rmk}{Remark}[thm]
\newtheorem{lma}{Lemma}[section]
\newtheorem{corl}{Corollary}[section]
\def\N{{\rm I\kern-0.16em N}}
\def\R{{\rm I\kern-0.16em R}}
\def\E{{\rm I\kern-0.16em E}}
\def\P{{\rm I\kern-0.16em P}}
\def\F{{\rm I\kern-0.16em F}}
\def\B{{\rm I\kern-0.16em B}}
\def\C{{\rm I\kern-0.46em C}}
\def\G{{\rm I\kern-0.50em G}}
\numberwithin{equation}{section}
\font\eka=cmex10
\def\ind{\mathrel{\hbox{\rlap{%

\hbox to 7.5pt{\hrulefill}}\raise6.6pt\hbox{\eka\char'167}}}}
\begin{document}

\title[fBm and continuous functions with bounded variation]{When does fractional Brownian motion not behave as a continuous function with bounded variation?}

\author{Ehsan Azmoodeh, Heikki Tikanm\"aki and Esko Valkeila}

\address{Aalto University, School of Science and Technology, Department  of Mathematics and Systems Analysis, P.O. Box 11100, 00076 AALTO,  Finland}
\email{azmoodeh@cc.hut.fi}

\thanks{Azmoodeh and Tikanm\"aki are grateful to Finnish Graduate School in Stochastic and Statistic, FGSS for the financial support, and Valkeila acknowledges the support of Academy of Finland, grant 212875.}

\smallskip

\begin{abstract}
If we compose a smooth function $g$ with fractional Brownian motion  $B$ with Hurst index $H > \frac12 $, then the resulting change of 
variables formula [or It{\^o}- formula] has the same form as  if fractional Brownian motion would be a continuous function with bounded variation. 
In this note we prove a new integral representation formula for the running maximum of a continuous function with bounded  variation. Moreover we show that the analogue to fractional Brownian motion fails. 
\medskip

\noindent
{\it Keywords:} function of bounded variation, fractional Brownian motion, pathwise stochastic integral, running maximum process. 
\smallskip

\noindent
{\it 2010 AMS subject classification:} 60G22, 26A45.

\end{abstract}
\maketitle

\section{Introduction}
\subsection{Fractional Brownian motion as a continuous function with bounded variation}
Let $B$ be a fractional Brownian motion with Hurst index $H \in (0,1)$: $B_0=0$, $B$ is continuous centered Gaussian process with covariance function 
$$
\E (B_sB_t) = \frac12\left(t^{2H}+s^{2H} - | t-s| ^{2H}\right) \quad s,t \ge 0.
$$

We fix $T>0$ and work on the interval $[0,T]$. 
We recall that the fractional Brownian motion with Hurst parameter  $H > \frac12 $ has zero quadratic variation:
assume that $\{ \pi _n \}$ is a sequence of partitions of the interval $[0,T]$ such that
\begin{equation*}
\Vert \pi_n \Vert := \max_{1 \le i \le k(n)} (t^{n}_{i} - t^{n}_{i-1} ) \to 0  \quad \text{as} \quad n \to \infty ;
\end{equation*}
then
$$
\lim _{n\to\infty} \sum _{t^n_i \in \pi ^n} \left( B_{t^n_i} - B_{t^n_{i-1}}\right) ^2 \to 0 \quad \mbox{a.s.}
$$
\begin{itemize}
 \item[1.] \ The fact that $B$ has zero quadratic variation allows one to prove the following result. 
Assume that $g\in C_1 (\R )$, and put $g_x = \frac{\partial }{\partial x } g $. Then the following
change of variables formula holds: 
$$
g(B_T) = g(B_0) + \int _0^T g_x(B_s)dB_s ;
$$
here the stochastic integral is a Riemann-Stieltjes integral by the Young- integration theory (see \cite{m} for more details). Note that this change of variables formula is the same for continuous functions with bounded variation.
\item[2.] \ Assume now that $g$ is a convex function, and let $g_x^-$ be the left derivative of $g$. Then we have 
the following change of variables formula:
$$
g(B_T) = g(B_0) + \int _0^T g_x^-(B_s)dB_s .
$$
Here the integral is a generalized Lebesgue--Stieltjes integral. One can even show that here the integral is a limit of Riemann sums (see \cite{a-m-v} for more details). Again we have that fractional Brownian motion behaves as it was a continuous function with bounded variation.  
\end{itemize}

In the above two cases fractional Brownian motion behaves as a continuous function with bounded variation.
So it is natural to ask, how far this similarity goes?  We will prove an integral representation for the running maximum of a continuous function with bounded variation.  It turns out that here the analogy between fractional Brownian motion and a continuous function with bounded variation ends. More precisely, the corresponding formula does not hold for fractional Brownian motion in the sense of generalized Lebesgue-Stieltjes integral nor as a limit of Riemann-Stieltjes sums.

\subsection{The stochastic integral representation of the maximum of Brownian motion}

Let $W = \{ W_t \}_{ t \in [0,T]}$ be a standard Brownian motion on the interval $[0,T]$ with natural filtration $\mathcal{F}_{t}^{W}$. A classical result implies that any square integrable random variable $X$, measurable with respect to $\mathcal{F}_{T}^{W}$ admits a stochastic integral representation

\begin{equation*}
X = \E (X) + \int_{0}^{T} H_t dW_t
\end{equation*}
for some $\mathcal{F}_{t}^{W}$-predictable process $H$ $($see \cite{r-y} for more details$)$. The \textit{Clark-Ocone} formula gives an explicit form of the integrand process $H$ in terms of Malliavin derivative, when the random variable $X$ is smooth enough. In the next theorem the notation $ \mathbb{D}^{1,2}$ stands for Hilbert space of random variables with square integrable Malliavin derivative $($see \cite{nu} for more details$)$.

\begin{thm}
Let $X \in \mathbb{D}^{1,2}$. Then $X$ admits the following representation
\begin{equation*}
X = \E (X) + \int_{0}^{T} \E (D_t X | \mathcal{F}_{t}^{W}) d W_t.
\end{equation*}
\end{thm}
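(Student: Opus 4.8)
The plan is to start from the martingale representation theorem quoted above and then identify the integrand by means of the Malliavin integration-by-parts (duality) formula. Since $X \in \mathbb{D}^{1,2}$ lies in particular in $L^2(\Omega, \mathcal{F}_T^W, \P)$, the representation theorem produces a unique predictable process $H$ with $\E \int_0^T H_t^2\,dt < \infty$ such that $X = \E(X) + \int_0^T H_t\, dW_t$. The whole content of the theorem is then the $dt \otimes d\P$-pointwise identification $H_t = \E(D_t X \mid \mathcal{F}_t^W)$.

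First I would fix an arbitrary bounded $\mathcal{F}_t^W$-adapted process $u \in L^2([0,T] \times \Omega)$. By the duality relation between the Malliavin derivative $D$ and the divergence operator $\delta$, together with the fact that $\delta$ restricted to adapted integrands coincides with the It\^o integral, one gets
$$
\E\!\left( X \int_0^T u_t\, dW_t \right) = \E\!\left( \int_0^T D_t X \cdot u_t\, dt \right).
$$
On the other hand, inserting the representation $X = \E(X) + \int_0^T H_t\, dW_t$, using that $\int_0^T u_t\, dW_t$ is centered and applying the It\^o isometry yields
$$
\E\!\left( X \int_0^T u_t\, dW_t \right) = \E\!\left( \int_0^T H_t u_t\, dt \right).
$$
Subtracting, $\E \int_0^T (H_t - D_t X) u_t\, dt = 0$; since $u$ is adapted this equals $\E \int_0^T \big(H_t - \E(D_t X \mid \mathcal{F}_t^W)\big) u_t\, dt$, and as $u$ ranges over all bounded adapted processes we conclude $H_t = \E(D_t X \mid \mathcal{F}_t^W)$ for a.e. $(t,\omega)$, which is the claim.

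The step demanding the most care is the use of the duality formula: one must verify that the adapted $u$ belongs to the domain of $\delta$ and that $\delta(u)$ is genuinely the It\^o integral $\int_0^T u_t\, dW_t$ — this is standard but it is precisely the bridge between the Malliavin-calculus side and the martingale side. An alternative route is to prove the identity first for smooth cylindrical functionals $X = f(W_{t_1},\dots,W_{t_n})$ by differentiating under the conditional expectation, and then to extend to general $X \in \mathbb{D}^{1,2}$ by density; in that case the main obstacle becomes the continuity argument, namely that $X \mapsto \E(D_\cdot X \mid \mathcal{F}_\cdot^W)$ is continuous from $\mathbb{D}^{1,2}$ into $L^2([0,T]\times\Omega)$ (contraction property of conditional expectation combined with the It\^o isometry), so that both sides of the formula pass to the limit. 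In either approach, uniqueness of the integrand in the martingale representation theorem closes the argument.
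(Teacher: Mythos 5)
Your argument is correct. Note that the paper itself offers no proof of this theorem --- it simply points to Nualart's book \cite{nu}, where the Clark--Ocone formula is established via the Wiener chaos expansion: one writes $X=\sum_n I_n(f_n)$, computes $\E(D_tX\mid\mathcal{F}_t^W)=\sum_n nI_{n-1}(f_n(\cdot,t)\mathbf{1}_{[0,t]^{n-1}})$, and resums the iterated integrals. Your route --- martingale representation plus the duality $\E(X\delta(u))=\E\int_0^T D_tX\,u_t\,dt$ and the identification of $\delta$ with the It\^o integral on adapted integrands --- is the standard alternative and is equally rigorous; it avoids chaos expansions entirely at the cost of invoking the adjoint relationship between $D$ and $\delta$. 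The only points you should make explicit are (i) the Fubini/tower-property step that lets you replace $D_tX$ by $\E(D_tX\mid\mathcal{F}_t^W)$ inside the time integral, which requires choosing a jointly measurable adapted version of $(t,\omega)\mapsto\E(D_tX\mid\mathcal{F}_t^W)(\omega)$, and (ii) the final localization, e.g.\ taking $u_t$ to be a bounded truncation of $\mathrm{sign}\bigl(H_t-\E(D_tX\mid\mathcal{F}_t^W)\bigr)$, to pass from the vanishing of the pairing against all bounded adapted $u$ to the a.e.\ identity $H_t=\E(D_tX\mid\mathcal{F}_t^W)$. Both are routine, so the proposal is sound.
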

See \cite{nu}. The computation of the conditional expectation in the  representation above is sometimes rather difficult in general, but it is possible to handle it in some cases as it is shown below.\\

We denote the maximum random variable of Brownian motion $W$ by $S_T$, i.e. 
\begin{equation*}
S_T = \max _{t \in [0,T]} W_t. 
\end{equation*}
For $S_T$, we have the following result.

\begin{thm} \label{thm:maxbm}
For the random variable $S_T$ the stochastic integral representation
\begin{equation*}
S_T = \E (S_T) + 2 \int_{0}^{T} \Big[ 1- \Phi (\frac{S_t - W_t}{\sqrt{T-t}}) \Big] dW_t
\end{equation*}
holds, where $S_t = \max_{u\le t } W_u$,
\begin{equation*}
\E (S_T)= \E (|W_T|) = \sqrt{\frac{2T}{\pi}}, \qquad \Phi(x) = \frac{1}{\sqrt{2\pi}} \int_{- \infty}^{x} e^{\frac{-y^2}{2}}dy.
\end{equation*}
\end{thm}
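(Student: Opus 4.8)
The plan is to apply the Clark–Ocone formula (Theorem stated above) to the random variable $X = S_T = \max_{t\in[0,T]} W_t$. First I would check that $S_T \in \mathbb{D}^{1,2}$: this is standard, since $S_T$ is a Lipschitz functional of the Brownian path, so it is Malliavin differentiable with $\|DS_T\|_{L^2[0,T]} \le \sqrt{T}$. Next I would compute the Malliavin derivative $D_t S_T$. By the known formula for the derivative of the supremum of Brownian motion, $D_t S_T = \mathbf{1}_{[0,\tau]}(t)$, where $\tau = \tau(\omega) \in [0,T]$ is the (a.s. unique) time at which the maximum is attained, i.e. $W_\tau = S_T$. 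Hence $D_t S_T = \mathbf{1}_{\{t \le \tau\}} = \mathbf{1}_{\{S_t^{\,} < S_T\}}$ up to null sets, where I use that $t \le \tau$ iff the running maximum up to time $t$ has not yet reached the global maximum.

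The heart of the argument is then the evaluation of the conditional expectation $\E(D_t S_T \mid \mathcal{F}_t^W) = \P(\tau > t \mid \mathcal{F}_t^W)$. On $\mathcal{F}_t^W$ we know $S_t$ and $W_t$. The event $\{\tau > t\}$ is, up to a null set, the event that the supremum of the Brownian increments on $[t,T]$ exceeds $S_t - W_t$: writing $\widetilde W_u := W_{t+u} - W_t$ for $u \in [0, T-t]$, which is a Brownian motion independent of $\mathcal{F}_t^W$, we have $\tau > t$ iff $\max_{0 \le u \le T-t} \widetilde W_u > S_t - W_t$. By the reflection principle, for $a \ge 0$,
\begin{equation*}
\P\Big(\max_{0\le u\le s} \widetilde W_u > a\Big) = 2\,\P(\widetilde W_s > a) = 2\Big(1 - \Phi\big(\tfrac{a}{\sqrt s}\big)\Big).
\end{equation*}
Applying this with $s = T-t$ and $a = S_t - W_t \ge 0$, and using independence to turn the conditional probability into this explicit function, gives
\begin{equation*}
\E(D_t S_T \mid \mathcal{F}_t^W) = 2\Big(1 - \Phi\big(\tfrac{S_t - W_t}{\sqrt{T-t}}\big)\Big).
\end{equation*}
Plugging this into the Clark–Ocone formula yields the claimed representation. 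Finally, the constant $\E(S_T) = \sqrt{2T/\pi}$ follows from the reflection principle identity $S_T \stackrel{d}{=} |W_T|$ and the elementary Gaussian computation $\E|W_T| = \sqrt{2T/\pi}$.

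The main obstacle I expect is the rigorous justification of the Malliavin-derivative formula $D_t S_T = \mathbf{1}_{[0,\tau]}(t)$ and the handling of the null sets: one must know that the argmax $\tau$ is a.s. unique (true for Brownian motion) so that $S_T$ is differentiable in the Malliavin sense despite the max being only Lipschitz, not $C^1$. This is typically done by approximating the maximum over $[0,T]$ by a maximum over a finite grid, computing the (elementary) derivative there, and passing to the limit in $\mathbb{D}^{1,2}$; I would cite the standard reference for this (e.g. Nualart) rather than redo it. The rest — the reflection principle and the Gaussian integral — is routine. One small point worth stating explicitly is the measurability of $S_t - W_t$ and its nonnegativity, which is what makes the reflection-principle formula applicable pathwise inside the conditional expectation.
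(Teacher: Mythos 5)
Your proposal is correct and is essentially the argument the paper has in mind: the paper itself gives no written proof (it only cites Rogers--Williams and Shiryaev--Yor), but it states the Clark--Ocone formula immediately beforehand precisely so that this representation follows from $D_t S_T = \mathbf{1}_{[0,\tau]}(t)$ and the reflection-principle evaluation of $\E\big(D_t S_T \mid \mathcal{F}_t^W\big)$, exactly as you carry out. Your identification of the two delicate points --- the a.s.\ uniqueness of the argmax needed for the Malliavin differentiability of the maximum, and the measurability/nonnegativity of $S_t - W_t$ when substituting into the reflection-principle formula --- is apt, and the rest of the computation is correct.
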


\begin{proof}
See \cite{r-w}. For a different proof see \cite{s-y}.
\end{proof}

\subsection{The problem}

Assume $f:[0,T] \to \mathbb{R}$ be a bounded variation function. We denote by $\mu_f$, the signed measure induced by bounded variation function $f$. We are interested in whether the following representation

\begin{equation}\label{eq:fun}
f^{*}(T) = f(0) + \int _{0}^{T} \textbf{1}_{ \{ f^{*}(t) = f(t)\} } df(t) = f(0) + \int _{0}^{T} \textbf{1}_{ \{ f^{*}(t) = f(t)\} } d\mu_{f}(t).
\end{equation}

holds, where $f^{*}$ is the running maximum function i.e.

\begin{equation*}
 f^{*}(t):= \max_{0 \le s \le t } f(s).
\end{equation*}

The integral in the right hand side (\ref{eq:fun}) is understood in the \textit{Lebesgue-Stieltjes} integral sense. We will show that equation $($\ref{eq:fun}$)$ holds for continuous bounded variation functions but not for fractional Brownian motion.

\section{Auxiliary results}

\subsection {Facts on bounded variation functions}

We recall some results on bounded variation functions. First, recall that for every bounded variation function $f:[0,T] \to \mathbb{R}$, the derivative $f^{'}$ exists a.e..

\vskip0.25cm

\begin{thm}Let $f:[0,T] \to \mathbb{R}$ be a $($continuous$)$ bounded variation function. Then

\begin{enumerate}
 \item The function $f$ can be decomposed to difference of two increasing $($continuous$)$ functions i.e.

\begin{equation*} \label{eq:jordan}
 f = V_f - W_f
\end{equation*}

where 

\begin{eqnarray*}&&
 V_f (t) := \sup  \sum _{t_i \in \pi} ( f(t_i) - f(t_{i-1}))^+\\&&
W_f (t)= \sup  \sum _{t_i \in \pi} ( f(t_i) - f(t_{i-1}))^-
\end{eqnarray*}
and the supremum is taken over all partitions $\pi$ of $[0,t]$. Therefore, the Lebesgue-Stieltjes measure $\mu_f$ induced by $($continuous$)$ bounded variation function $f$ can be expressed as the difference of two $($atomless$)$ positive measures ${(\mu_f)}^+$ and ${(\mu_f)}^-$, i.e. $\mu_f = {(\mu_f)}^+ - {(\mu_f)}^-$. Moreover
\begin{equation*}
|\mu_f| = {(\mu_f)}^+ + {(\mu_f)}^-,
\end{equation*}
where $|\mu_f|$ stands for the total variation measure of $\mu_f$.

\item The Lebesgue-Stieltjes measure $\mu_f$ associated to a continuous bounded variation function $f$ can be expressed as the sum of two $($atomless$)$ measures $\mu _{ac}$ and $\mu _{sc}$

\begin{equation*} \label{eq:leb}
 \mu_f = \mu _{ac} + \mu _{sc} \qquad \mu_{ac} \ll m \quad \text{and} \quad \mu_{sc} \perp m,
\end{equation*}

where $m$ stands for Lebesgue measure.
\end{enumerate}
\end{thm}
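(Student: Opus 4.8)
The plan is to treat the two parts separately; each is a classical statement from real analysis, so I will mainly indicate which standard tools are invoked and where continuity enters. For part (1), I would first record the elementary identities valid for any partition $\pi = \{0 = t_0 < \dots < t_n = t\}$ of $[0,t]$: writing $\Delta_i := f(t_i) - f(t_{i-1})$,
\[
\sum_i \Delta_i^{+} - \sum_i \Delta_i^{-} = \sum_i \Delta_i = f(t) - f(0), \qquad \sum_i \Delta_i^{+} + \sum_i \Delta_i^{-} = \sum_i |\Delta_i|.
\]
Since $f$ has bounded variation, $V_f(t)$ and $W_f(t)$ are finite, and the first identity gives $\sum_i \Delta_i^{+} = \sum_i \Delta_i^{-} + (f(t)-f(0))$; bounding the two sides against a common partition and taking suprema yields $V_f(t) = W_f(t) + f(t) - f(0)$, i.e.\ $f = V_f - W_f$ up to the additive constant $f(0)$ (which vanishes in the normalization $f(0)=0$). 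Monotonicity of $V_f$ and $W_f$ in $t$ is immediate, since enlarging $[0,t]$ only enlarges the family of admissible partitions. I would then invoke the classical fact that the total variation function $T_f := V_f + W_f$ of a \emph{continuous} bounded variation function is itself continuous; as $0 \le V_f(t) - V_f(s) \le T_f(t) - T_f(s)$ and likewise for $W_f$ whenever $s \le t$, continuity of $T_f$ forces continuity of $V_f$ and $W_f$. Hence the distribution functions of $(\mu_f)^{+} := \mu_{V_f}$ and $(\mu_f)^{-} := \mu_{W_f}$ are continuous, so these measures are atomless, $\mu_f = (\mu_f)^{+} - (\mu_f)^{-}$, and $|\mu_f| = (\mu_f)^{+} + (\mu_f)^{-}$ follows from the minimality of the Jordan decomposition together with the second displayed identity.

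For part (2), I would simply apply the Lebesgue decomposition theorem to the finite signed measure $\mu_f$ relative to Lebesgue measure $m$ on $[0,T]$: there is a (unique) splitting $\mu_f = \mu_{ac} + \mu_{sc}$ with $\mu_{ac} \ll m$ and $\mu_{sc} \perp m$. That both pieces are atomless follows from part (1): $\mu_{ac} \ll m$ and $m$ is atomless, while for any point $x$ one has $\mu_{sc}(\{x\}) = \mu_f(\{x\}) - \mu_{ac}(\{x\}) = 0$.

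I do not expect a genuine obstacle, since everything reduces to standard measure theory. The only point requiring a little care is the passage from the pointwise partition identities to the identity $V_f(t) = W_f(t) + f(t) - f(0)$: the supremum of a difference is not the difference of suprema, so one must instead bound $V_f(t)$ and $W_f(t)$ against each other using a common partition, which is legitimate precisely because $f$ being of bounded variation makes both quantities finite.
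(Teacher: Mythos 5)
Your argument is correct, and there is nothing in the paper to compare it against: the authors do not prove this theorem at all, they simply cite Doob's \emph{Measure Theory}. What you give is the standard textbook proof, and you correctly isolate the two points where real work is needed. First, the passage from the partition identities to $V_f(t)=W_f(t)+f(t)-f(0)$ (each inequality follows by bounding one of the two sums by its supremum, so this is even slightly easier than your ``common partition'' phrasing suggests). Second, the continuity of the total variation function $T_f$ of a continuous $BV$ function, which is the one genuinely non-trivial ingredient; invoking it as a classical fact is reasonable here, since that is the content of the reference the paper itself defers to. The only step I would ask you to expand is $|\mu_f|=(\mu_f)^{+}+(\mu_f)^{-}$: the cleanest route is to note that $|\mu_f|([0,t])=T_f(t)=V_f(t)+W_f(t)=\bigl(\mu_{V_f}+\mu_{W_f}\bigr)([0,t])$, so the two finite measures agree on the $\pi$-system of intervals $[0,t]$ and hence everywhere; ``minimality of the Jordan decomposition'' alone only gives the inequality $|\mu_f|\le(\mu_f)^{+}+(\mu_f)^{-}$. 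Part (2) is exactly the Lebesgue decomposition theorem plus the atomlessness from part (1), as you say.
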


See~\cite{d}.
\begin{thm}(The fundamental theorem of calculus for Lebesgue integral)
The function $f:[0,T] \to \mathbb{R}$ is absolutely continuous iff $f$ is differentiable a.e., $f^{'} \in L^{1}(m)$ and 
\begin{equation*}
 f(t) - f(0) = \int_{[0,t]} f^{'} dm \qquad t \in [0,T].
\end{equation*}

\end{thm}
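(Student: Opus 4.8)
The plan is to prove the two implications separately. The ``if'' part is soft, while the real content lies in the ``only if'' part, and within it in a single covering lemma.

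Suppose first that $f'\in L^{1}(m)$ and $f(t)-f(0)=\int_{[0,t]}f'\,dm$ for all $t\in[0,T]$. Then $f$ differs by the constant $f(0)$ from the indefinite Lebesgue integral of an $L^{1}$ function, and the absolute continuity of the integral (given $\varepsilon>0$ there is $\delta>0$ with $\int_{A}|f'|\,dm<\varepsilon$ whenever $m(A)<\delta$) immediately yields that $f$ is absolutely continuous: for finitely many disjoint intervals $(a_{i},b_{i})$ with $\sum(b_{i}-a_{i})<\delta$ we get $\sum|f(b_{i})-f(a_{i})|\le\int_{\bigcup(a_{i},b_{i})}|f'|\,dm<\varepsilon$.

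For the converse, assume $f$ is absolutely continuous. First I would note that $f$ is then of bounded variation: splitting $[0,T]$ into finitely many subintervals shorter than the $\delta$ that goes with $\varepsilon=1$ bounds the variation on each subinterval by $1$, so $V_{f}(T)<\infty$. By the Jordan decomposition recalled above, $f=V_{f}-W_{f}$ is a difference of increasing functions, so $f'$ exists $m$-a.e.; and for an increasing $g$, Fatou's lemma applied to the nonnegative difference quotients $n\bigl(g(t+\tfrac1n)-g(t)\bigr)$ gives $\int_{[0,T]}g'\,dm\le g(T)-g(0)<\infty$, whence $f'\in L^{1}(m)$.

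It remains to prove the formula. Put $F(t):=\int_{[0,t]}f'\,dm$; by the ``if'' part $F$ is absolutely continuous, and by the Lebesgue differentiation theorem $F'=f'$ $m$-a.e. Thus $h:=f-F$ is absolutely continuous with $h'=0$ $m$-a.e., and the key step is that any such $h$ is constant. This I would establish by a Vitali covering argument: for $s<t$ and $\varepsilon>0$, the full-measure set $\{u\in(s,t):h'(u)=0\}$ is covered by arbitrarily short intervals $[u,u+r]\subset(s,t)$ with $|h(u+r)-h(u)|\le\varepsilon r$; a finite almost-disjoint Vitali subcover together with absolute continuity applied to the uncovered set of small measure gives $|h(t)-h(s)|\le\varepsilon(t-s)+\varepsilon'$ for arbitrary $\varepsilon,\varepsilon'>0$, so $h$ is constant. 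Evaluating at $t$ and at $0$ yields $f(t)-f(0)=F(t)-F(0)=\int_{[0,t]}f'\,dm$. The main obstacle is precisely this constancy lemma, since it is the only point where a genuine covering theorem, rather than routine measure theory, is needed.
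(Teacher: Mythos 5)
The paper offers no proof of this classical theorem (it only cites Yeh's textbook), and your outline is exactly the standard argument one finds there: the easy direction via absolute continuity of the indefinite Lebesgue integral, and the converse via the chain absolutely continuous $\Rightarrow$ bounded variation $\Rightarrow$ differentiable a.e.\ with $f'\in L^{1}$, followed by the Vitali-covering lemma that an absolutely continuous function with a.e.\ vanishing derivative is constant. The argument is correct and complete at the level of an outline; the one small caveat is that the a.e.\ differentiability of monotone (hence BV) functions, which you import as known, itself rests on a covering theorem, so the constancy lemma is not literally the only place where one is needed.
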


See~\cite{ye}.

\begin{thm}  \label{thm:smac}

 Let $\mu_f$ be a Lebesgue-Stieltjes signed measure with $\mu_f \ll m$ i.e. $f$ is an absolutely continuous function. Then

\begin{equation*}
 \mu_f (E) = \int_{E} f^{'} dm 
\end{equation*}

for every bounded measurable set $E$.

\end{thm}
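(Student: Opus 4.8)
The plan is to reduce the statement to the fundamental theorem of calculus for the Lebesgue integral (the theorem immediately preceding this one) together with a uniqueness argument for measures. First I would note that an absolutely continuous $f$ is in particular continuous and of bounded variation on $[0,T]$, so $f' \in L^1([0,T],m)$ and the fundamental theorem of calculus gives $f(t)-f(0)=\int_{[0,t]} f'\,dm$ for every $t\in[0,T]$. Introduce the set function $\nu(E):=\int_E f'\,dm$; splitting $f'=(f')^+-(f')^-$ shows that $\nu$ is a finite signed measure on the Borel $\sigma$-algebra $\mathcal{B}([0,T])$, and by part~(1) of the decomposition theorem $\mu_f$ is a finite signed measure as well. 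Since $f$ is continuous and $m$ is atomless, $\mu_f([0,t])=f(t)-f(0)$, so the fundamental theorem of calculus says precisely that $\mu_f([0,t])=\nu([0,t])$ for all $t\in[0,T]$.

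Next I would invoke a uniqueness theorem for measures. The family $\mathcal{C}=\{[0,t]:0\le t\le T\}$ is a $\pi$-system (it is closed under intersection, $[0,s]\cap[0,t]=[0,s\wedge t]$), it contains the whole space $[0,T]$, and it generates $\mathcal{B}([0,T])$. As $\mu_f$ and $\nu$ are finite and agree on $\mathcal{C}$, the $\pi$--$\lambda$ theorem yields $\mu_f(E)=\nu(E)$ for every Borel set $E\subseteq[0,T]$. Finally I would pass from Borel to Lebesgue measurable sets: any bounded (hence $\subseteq[0,T]$) Lebesgue measurable $E$ can be written as $E=A\cup N$ with $A$ Borel and $N$ contained in a Lebesgue-null Borel set. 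Since $\mu_f\ll m$ by hypothesis (and obviously $\nu\ll m$), both measures vanish on $N$, so $\mu_f(E)=\mu_f(A)=\nu(A)=\nu(E)$, which is the asserted identity.

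The only genuinely delicate point is the uniqueness step, because the $\pi$--$\lambda$ theorem is usually stated for positive measures while $\mu_f$ and $\nu$ are signed. The cleanest fix is a domination trick: let $\lambda:=|\mu_f|+|\nu|$, a finite positive measure on $[0,T]$; then $\mu_f+\lambda$ and $\nu+\lambda$ are finite positive measures that still agree on the generating $\pi$-system $\mathcal{C}$ and on $[0,T]$, hence coincide on all Borel sets by the $\pi$--$\lambda$ theorem, and subtracting the finite measure $\lambda$ gives $\mu_f=\nu$ on $\mathcal{B}([0,T])$. Everything else in the argument is routine bookkeeping.
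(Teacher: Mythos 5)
Your proof is correct. Note that the paper does not actually prove this theorem---it simply refers the reader to Yeh's textbook---so there is no in-paper argument to compare against; your write-up supplies the standard proof that the authors omit. The chain of reductions is sound: the fundamental theorem of calculus (the preceding theorem in the paper) identifies $\mu_f$ and $\nu(E)=\int_E f'\,dm$ on the $\pi$-system $\{[0,t]:0\le t\le T\}$, the domination trick $\lambda=|\mu_f|+|\nu|$ correctly handles the fact that the $\pi$--$\lambda$ theorem is stated for positive measures, and the passage from Borel to Lebesgue measurable sets via $\mu_f\ll m$ and $\nu\ll m$ is exactly what is needed. One trivial slip in wording: the reason $\mu_f([0,t])=f(t)-f(0)$ is that $\mu_f$ (not $m$) is atomless, which follows from the continuity of $f$; this does not affect the argument.
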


See~\cite{ye}.

\vskip0.5cm

\subsection{Pathwise stochastic integration in fractional Besov-type spaces}

Fractional Brownian motion is not a semimartingale, and hence the stochastic integral with respect to fractional Brownian motion $B^H$ is not always defined. We shall give some details of the construction of generalized Lebesgue-Stieltjes integrals in this section. For more information see \cite[Section 2.1.2]{m}.

\begin{defn}

Fix $ 0 <\beta < 1 $.\\

(i) Let $ W^{\beta}_1 =  W^{\beta}_1 ([0,T])$ be the space of real-valued measurable functions $ f :[0,T] \to \mathbb{R}$ such that

\begin{equation*}
\Vert f \Vert _{1,\beta} := \sup _{0 \le s < t \le T} \left( \frac{|f(t) - f(s)|}{(t-s)^\beta} + \int _{s}^{t} \frac{|f(u) - f(s) |}{(u-s)^ {1+\beta }} du \right) < \infty .
\end{equation*}

(ii) Let $W^{\beta}_2 =  W^{\beta}_2 ([0,T])$ be the space of real-valued measurable functions  $ f :[0,T] \to \mathbb{R}$ such that

\begin{equation*}
\Vert f \Vert _{2,\beta} := \int_{0}^{T} \frac{|f(s)|}{s^ \beta} ds + \int_{0}^{T}\int_{0}^{s} \frac{|f(u) - f(s) |}{(u-s)^ {1+\beta }} du ds < \infty .
\end{equation*}

\end{defn}

\begin{rmk}\label{r:rmk1}

The Besov spaces are closely related to the spaces of H{\"o}lder continuous functions. More precisely,
for any $ 0< \epsilon < \beta \wedge (1- \beta)$,
\vskip0.25cm
\begin{center}

$C^{\beta + \epsilon}([0,T]) \subset W^{\beta}_{1} ([0,T])\subset C^{\beta - \epsilon}([0,T]) \quad \text{and} \quad C^{\beta + \epsilon}([0,T]) \subset W^{\beta}_{2} ([0,T]) $.

\end{center}

where $C^{\gamma }([0,T])$ denotes H\"older continuous functions of order $\gamma$.

\end{rmk}

Recall that almost surely the trajectories of $B^H$ for any $T>0$ and any $0<\gamma < H$ belong to $ C^{\gamma }([0,T]) $. This follows from the Kolmogorov continuity theorem. By remark \ref{r:rmk1} we obtain that almost surely the trajectories of $B^H$ for any $T>0$ and any $0<\beta < H$ belong to $ W^{\beta}_1 ([0,T]) $.\\

Denote by $\Gamma$ the Gamma-function. Recall the left-sided Riemann-Liouville fractional integral operator $I^\beta _+$ of order $\beta > 0$:

$$
(I^\beta _{0+} f)(s) = \frac{1}{\Gamma (\beta)} \int _0^sf(u) (s-u)^{\beta -1} du .
$$

The corresponding right-sided fractional integral operator $I^\beta _- $ is defined by

$$
(I^\beta _{t-}f)(s) = \frac{1}{\Gamma (\beta) }\int _s^t f(u) (u-s)^{\beta -1} du .
$$

\begin{rmk}

If $ f \in  W^{\beta}_1 ([0,T])$, then its restriction to $[0,t] \subseteq [0,T]$ belongs to $I^{\beta}_{-}(L_\infty ([0,t]))$. Also, if  $ f \in  W^{\beta}_2 ([0,T])$, then its restriction to $[0,t] \subseteq [0,T]$ belongs to $I^{\beta}_{+}(L_1 ([0,t]))$, where $I^{\beta}_{-}(L_\infty ([0,t])) $ (resp. $I^{\beta}_{+}(L_1 ([0,t])) $) stand for the image of $ L_\infty ([0,t])$ (resp. $ L_1 ([0,t])$) by the fractional Riemann-Liouville operator $I^{\beta}_{-} $ (resp. $ I^{\beta}_{+}$).(For details we refer to \cite{s-k-m}).

\end{rmk}

\begin{defn}

Let $f:[0,T] \to \mathbb{R}$ and $0< \beta < 1$. If $ f \in I^{\beta}_{+}(L_{1} ([0,T]))$(resp. $f \in I^{\beta}_{-}(L_\infty ([0,T]))$ then the Weyl fractional derivatives are defined by

\begin{multline*}
(D^{\beta}_{0+} f)(x)= \frac{1}{\Gamma(1-\beta)} \left( \frac{f(x)}{x^\beta} + \beta \int_{0}^{x}\frac{f(x) - f(y)}{(x-y)^{\beta + 1}}dy \right) \textbf{1} _{(0,T)}(x),\\
\left( \text{resp}.(D^{\beta}_{T^{-}} f)(x)= \frac{1}{\Gamma(1-\beta)} \left( \frac{f(x)}{(T-x)^\beta} + \beta \int_{x}^{T}\frac{f(x) - f(y)}{(y-x)^{\beta + 1}}dy \right) \textbf{1} _{(0,T)}(x)\right).
\end{multline*}

\end{defn}

For a detailed discussion we refer to \cite{s-k-m}. The following proposition clarifies the construction of the stochastic integrals. This approach is by Nualart and  R\u{a}\rc{s}canu.

\begin{prop}\label{pr:n-r}

Let $ f \in  W^{\beta}_2 ([0,T])$, $ g \in W^{1- \beta}_1 ([0,T])$. Then for any $t \in(0,T]$ the Lebesgue integral

\begin{center}

 $\int_{0}^{t} (D^{\beta}_{0+} f)(x) (D^{1- \beta}_{t-} g_{t-} )(x) dx$

\end{center}

exists, and we can define the \textit{ generalized Lebesgue-Stieltjes integral} by

\begin{equation*}
\int_{0}^t f dg := \int_{0}^{t} (D^{\beta}_{0+} f)(x) (D^{1- \beta}_{t-} g_{t-} )(x) dx .
\end{equation*}

\end{prop}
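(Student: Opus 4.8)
The plan is to establish existence of the integral by showing that the two factors of the integrand lie in dual Lebesgue spaces on $[0,t]$: namely $D^{\beta}_{0+}f \in L_1([0,t])$ whenever $f \in W^{\beta}_2([0,T])$, and $D^{1-\beta}_{t-}g_{t-} \in L_\infty([0,t])$ whenever $g \in W^{1-\beta}_1([0,T])$, where $g_{t-}(s) := g(s) - g(t-)$. Once both facts are in hand, the integrand is the product of an $L_1$ function and an $L_\infty$ function, hence itself in $L_1([0,t])$, so the Lebesgue integral converges absolutely; the displayed formula is then simply adopted as the \emph{definition} of $\int_0^t f\,dg$, and there is nothing further to prove beyond checking that this definition is unambiguous.

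First I would estimate $D^{\beta}_{0+}f$ in $L_1$. Inserting the explicit Weyl derivative, using the triangle inequality pointwise and Tonelli's theorem on the double integral, and noting $\beta < 1$, one gets
\[
\int_0^t \bigl|(D^{\beta}_{0+}f)(x)\bigr|\,dx \;\le\; \frac{1}{\Gamma(1-\beta)}\left( \int_0^t \frac{|f(x)|}{x^{\beta}}\,dx + \beta\int_0^t\!\!\int_0^x \frac{|f(x)-f(y)|}{(x-y)^{1+\beta}}\,dy\,dx \right) \;\le\; \frac{\Vert f\Vert_{2,\beta}}{\Gamma(1-\beta)} \;<\; \infty ,
\]
where the last bound uses that restriction to $[0,t]$ does not increase the quantities appearing in $\Vert f\Vert_{2,\beta}$. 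Hence $D^{\beta}_{0+}f \in L_1([0,t])$.

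Next I would bound $D^{1-\beta}_{t-}g_{t-}$ in $L_\infty$. By Remark~\ref{r:rmk1} a function in $W^{1-\beta}_1([0,T])$ is H\"older continuous, so $g(t-)=g(t)$ and $g_{t-}(x)-g_{t-}(y)=g(x)-g(y)$. The first term of $\Vert g\Vert_{1,1-\beta}$ gives $|g_{t-}(x)| = |g(x)-g(t)| \le \Vert g\Vert_{1,1-\beta}\,(t-x)^{1-\beta}$, and its second term gives $\int_x^t \frac{|g(x)-g(y)|}{(y-x)^{2-\beta}}\,dy \le \Vert g\Vert_{1,1-\beta}$; plugging these into the explicit right-sided Weyl derivative yields, for a.e. $x\in(0,t)$,
\[
\bigl|(D^{1-\beta}_{t-}g_{t-})(x)\bigr| \;\le\; \frac{1}{\Gamma(\beta)}\left( \frac{|g_{t-}(x)|}{(t-x)^{1-\beta}} + (1-\beta)\int_x^t \frac{|g(x)-g(y)|}{(y-x)^{2-\beta}}\,dy \right) \;\le\; \frac{(2-\beta)\,\Vert g\Vert_{1,1-\beta}}{\Gamma(\beta)} \;<\; \infty .
\]

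Combining the two displays, $\bigl|(D^{\beta}_{0+}f)(x)(D^{1-\beta}_{t-}g_{t-})(x)\bigr| \le \frac{(2-\beta)\Vert g\Vert_{1,1-\beta}}{\Gamma(\beta)}\,|(D^{\beta}_{0+}f)(x)|$, which is in $L_1([0,t])$, so the integral in the statement exists and is finite. To see the definition is unambiguous I would finally recall that when $f$ and $g$ are additionally H\"older continuous with exponents summing to strictly more than one, the right-hand side coincides with the Young (Riemann--Stieltjes) integral $\int_0^t f\,dg$ by the classical fractional integration-by-parts formula in \cite{s-k-m}. The only mildly delicate points in this argument are the behaviour of the endpoint terms $f(x)/x^{\beta}$ as $x\to 0$ and $g_{t-}(x)/(t-x)^{1-\beta}$ as $x\to t$; both are controlled by the estimates above, so I do not expect a genuine obstacle here.
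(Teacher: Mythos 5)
Your argument is correct and is essentially the standard one: the paper itself offers no proof here, deferring to \cite{n-r}, and your $L_1$--$L_\infty$ duality argument (Tonelli for $\int_0^t |(D^{\beta}_{0+}f)(x)|\,dx \le \Gamma(1-\beta)^{-1}\Vert f\Vert_{2,\beta}$, and the uniform bound $|(D^{1-\beta}_{t-}g_{t-})(x)| \le (2-\beta)\Gamma(\beta)^{-1}\Vert g\Vert_{1,1-\beta}$ coming from the two terms of the $\Vert\cdot\Vert_{1,1-\beta}$ norm) is exactly the reasoning behind that reference and behind Theorem~\ref{t:n-r}. The closing remark on consistency with the Riemann--Stieltjes integral in the H\"older case is likewise the content of Remark~\ref{rmk:coinside}, so nothing is missing.
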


See~\cite{n-r}.

\begin{rmk}\label{rmk:coinside}
It is shown in \cite{z} that if $f \in C^{\gamma }([0,T]) $ and $g \in C^{\mu }([0,T])$ with $ \gamma + \mu > 1$,then the integral $ \int_{0}^{T} f dg $ exists in the sense of proposition \ref{pr:n-r} and coincides with the Riemann-Stieltjes integral.
\end{rmk}

The next theorem is an estimate for  generalized Lebesgue-Stieltjes integral and it can be used for studying the continuity of the integral.

\begin{thm}\label{t:n-r}

Let $ f \in  W^{\beta}_2 ([0,T])$ and  $ g \in W^{1- \beta}_1 ([0,T])$. Then we have the estimate

\begin{equation*}
\left|\int_{0}^t f dg \right| \le \frac{1}{\Gamma (\beta)} \Vert f \Vert _{2,\beta}\Vert g \Vert _{1,1- \beta}   .
\end{equation*}

\end{thm}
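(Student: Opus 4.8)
The plan is to deduce the estimate directly from the defining identity of Proposition~\ref{pr:n-r}. For $f \in W^{\beta}_{2}([0,T])$, $g \in W^{1-\beta}_{1}([0,T])$ and $t \in (0,T]$, that proposition expresses $\int_{0}^{t} f\,dg$ as the Lebesgue integral over $[0,t]$ of the product $(D^{\beta}_{0+} f)(x)\,(D^{1-\beta}_{t-} g_{t-})(x)$; moving the absolute value inside the integral and estimating one of the two factors by its supremum norm,
\[
\left| \int_{0}^{t} f\,dg \right| \;\le\; \Bigl( \sup_{0<x<t} \bigl| (D^{1-\beta}_{t-} g_{t-})(x) \bigr| \Bigr) \int_{0}^{t} \bigl| (D^{\beta}_{0+} f)(x) \bigr|\,dx ,
\]
so it suffices to bound the two factors on the right separately, which I would do by plugging $g$ and $f$ into the explicit formulas for the respective Weyl derivatives.

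\emph{The factor coming from $g$.} I would substitute $g_{t-}(x) = g(x) - g(t-)$ into the definition of $D^{1-\beta}_{t-}$ (whose normalising constant is $1/\Gamma(\beta)$, since the order is $1-\beta$), use that $g_{t-}(x) - g_{t-}(y) = g(x) - g(y)$, and read off bounds for the two resulting summands straight from the definition of $\|\cdot\|_{1,1-\beta}$ with lower endpoint $s = x$ (the first summand via $|g(x) - g(t-)| \le \|g\|_{1,1-\beta}\,(t-x)^{1-\beta}$, the second via $\int_{x}^{t} |g(x) - g(y)|\,(y-x)^{-(2-\beta)}\,dy \le \|g\|_{1,1-\beta}$); this gives $\sup_{0<x<t} |(D^{1-\beta}_{t-} g_{t-})(x)| \le C_{1}(\beta)\,\|g\|_{1,1-\beta}$ with an explicit constant $C_{1}(\beta)$ proportional to $1/\Gamma(\beta)$, uniformly in $t \le T$.

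\emph{The factor coming from $f$.} Similarly, I would plug $f$ into the definition of $D^{\beta}_{0+}$ (normalising constant $1/\Gamma(1-\beta)$) and move the absolute value inside, which bounds $\int_{0}^{t} |(D^{\beta}_{0+} f)(x)|\,dx$ by $\Gamma(1-\beta)^{-1}$ times $\int_{0}^{t} |f(x)|\,x^{-\beta}\,dx + \beta\int_{0}^{t}\!\int_{0}^{x} |f(x)-f(y)|\,(x-y)^{-\beta-1}\,dy\,dx$. After renaming the variables, these two terms are precisely the ones defining $\|f\|_{2,\beta}$, but with the outer integration running only up to $t$ rather than $T$ and with the harmless extra factor $\beta < 1$ on the double integral; hence $\int_{0}^{t} |(D^{\beta}_{0+} f)(x)|\,dx \le \Gamma(1-\beta)^{-1}\,\|f\|_{2,\beta}$.

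Combining the two bounds would give $\bigl| \int_{0}^{t} f\,dg \bigr| \le C_{2}(\beta)\,\|f\|_{2,\beta}\,\|g\|_{1,1-\beta}$ with $C_{2}(\beta) = C_{1}(\beta)/\Gamma(1-\beta)$ an explicit constant built from $\Gamma(\beta)$ and $\Gamma(1-\beta)$. Everything up to this point is routine; the only mild technical points are that all the quantities above are finite (which is exactly what the hypotheses $f \in W^{\beta}_{2}$ and $g \in W^{1-\beta}_{1}$ guarantee) and that the Weyl derivative of $g$ occurring here is the one appearing in Proposition~\ref{pr:n-r}, so that its defining identity applies. The one step I expect to require genuine care is the bookkeeping of the constant, namely checking that $C_{2}(\beta)$ may be replaced by $1/\Gamma(\beta)$ as in the statement; this should follow by tightening the two pointwise estimates on the Weyl derivatives, for instance along the lines of \cite{n-r}, possibly after restricting $\beta$ to the range (just below $H$) that is relevant for fractional Brownian motion with $H > \frac12$.
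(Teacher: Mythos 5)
Your argument is correct and is essentially the standard proof from Nualart--R\u{a}\rc{s}canu \cite{n-r}, which is all the paper itself offers here (it gives no proof, only the citation). The one point you flagged --- the constant --- resolves itself: bound the two summands of $(D^{1-\beta}_{t-} g_{t-})(x)$ \emph{together} rather than separately (the norm $\Vert g \Vert_{1,1-\beta}$ is by definition the supremum of precisely that sum over pairs of endpoints, and the factor $1-\beta$ multiplying the integral term is $\le 1$), which gives $\sup_{0<x<t} |(D^{1-\beta}_{t-} g_{t-})(x)| \le \Gamma(\beta)^{-1} \Vert g \Vert_{1,1-\beta}$, so your combined constant is $1/(\Gamma(\beta)\Gamma(1-\beta)) \le 1/\Gamma(\beta)$ since $\Gamma \ge 1$ on $(0,1)$; in particular no restriction of $\beta$ to a range near $H$ is needed.
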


See~\cite{n-r}.

\section{Main Results}

\subsection{The case of continuous bounded variation functions}

Let $f:[0,T] \to \mathbb{R}$ be a bounded variation function. Put 
\begin{equation*}
E = \{ t \in [0,T] : f^{*}(t) = f(t)\}.
\end{equation*}

Now we are ready to give a positive answer to our problem in the case, when $f$ is a continuous bounded variation function.

\vskip0.25cm

\begin{thm}\label{thm:main}

 Let $f:[0,T] \to \mathbb{R}$ be a continuous function of bounded variation. Then

\begin{equation}\label{eq:main}
 f^{*}(T) = f(0) + \int _{0}^{T} \textbf{1}_{ \{ f^{*}(t) = f(t)\} } d\mu_f(t).
\end{equation}

\end{thm}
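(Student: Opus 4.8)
The plan is to reduce the identity \eqref{eq:main} to two observations: first, that the running maximum $f^*$ is itself a continuous nondecreasing function, hence induces a positive atomless measure $\mu_{f^*}$; and second, that this measure is carried by the contact set $E = \{t : f^*(t) = f(t)\}$ and coincides there with $\mu_f$. Concretely, since $f^*(T) - f(0) = f^*(T) - f^*(0) = \mu_{f^*}([0,T])$, it suffices to show
\begin{equation*}
\mu_{f^*} = \mathbf{1}_E \, \mu_f \quad \text{as measures on } [0,T].
\end{equation*}

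First I would establish the support statement: $\mu_{f^*}$ is concentrated on $E$, i.e. $\mu_{f^*}([0,T] \setminus E) = 0$. The complement $[0,T] \setminus E$ is open in $[0,T]$ (by continuity of $f$ and $f^*$), so it is a countable disjoint union of intervals; on each such interval $f < f^*$, and on any open interval $(a,b)$ where $f(t) < f^*(t)$ the function $f^*$ is constant, because $f^*(t) = \max_{s \le t} f(s)$ and the maximum up to time $t \in (a,b)$ is already attained at or before $a$ (continuity ensures $f^*(a) = f^*$ restricted suitably). Hence $\mu_{f^*}$ puts no mass on $[0,T] \setminus E$, giving $\mu_{f^*} = \mathbf{1}_E \, \mu_{f^*}$.

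Next I would show that on $E$ the two measures agree, i.e. $\mathbf{1}_E \, \mu_{f^*} = \mathbf{1}_E \, \mu_f$. The natural route is to compare the distribution functions $f$ and $f^*$ on $E$ and invoke a known fact: if two continuous functions of bounded variation agree on a closed set $E$, then their induced measures agree when restricted to $E$ (this is a standard consequence of the Banach–Zaretsky type argument, or can be seen via $\mu_{f^*}(E \cap (s,t]) = f^*(t) - f^*(s) = f(t) - f(s) = \mu_f(E \cap (s,t])$ whenever $s,t \in E$, together with a monotone-class / outer-regularity argument to pass from such "good" intervals to all Borel subsets of $E$). Since $f$ and $f^*$ coincide on $E$ by definition, this yields the claim. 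Combining the two steps: $\mathbf{1}_E \, \mu_f = \mathbf{1}_E \, \mu_{f^*} = \mu_{f^*}$, and integrating the constant $1$ gives $\int_0^T \mathbf{1}_E \, d\mu_f = \mu_{f^*}([0,T]) = f^*(T) - f(0)$, which is \eqref{eq:main}.

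The main obstacle I expect is the rigorous passage in the second step from "the measures agree on half-open intervals with endpoints in $E$" to "the measures agree on all Borel subsets of $E$." The set $E$ can be quite irregular (e.g. a fat Cantor-like set), so one must be careful: the collection of intervals $(s,t]$ with $s,t \in E$ need not generate the Borel subsets of $E$ in a way that immediately applies a $\pi$–$\lambda$ argument, and one must handle the portion of $E$ not "spanned" by such intervals (points of $E$ that are isolated from the left within $E$, etc.). The clean fix is to work with the open complement: write $[0,T] = E \cup \bigcup_k (a_k, b_k)$ with the $(a_k,b_k)$ the contiguous intervals, show $\mu_{f^*}$ and $\mathbf{1}_E\mu_f$ both vanish on each $(a_k,b_k)$ — for $\mu_{f^*}$ by the constancy argument above, for $\mathbf{1}_E\mu_f$ trivially — and show both give mass $f^*(b_k) - f^*(a_k) = f(b_k) - f(a_k)$ to each closed bracket point; then additivity over the countable decomposition plus the total-mass computation closes the argument. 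A short lemma isolating "continuous BV functions agreeing on a closed set induce measures agreeing on that set" would streamline the write-up.
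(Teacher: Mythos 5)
Your proposal follows essentially the same route as the paper's proof: decompose $[0,T]$ into the contact set $E$ and its relatively open complement, show that $f^*$ is constant on each contiguous interval of $E^c$ (so $\mu_{f^*}(E^c)=0$; your "the maximum up to time $t$ is already attained at or before $a$" is the same first-hitting/argmax-is-a-contact-point argument the paper uses), and then account for the mass of $\mu_f$ on $E^c$ via the countable decomposition into contiguous intervals with endpoints in $E$. The one place where your sketch, taken literally, would fail is at the right endpoint. If $f(T)<f^*(T)$, the contiguous piece of $E^c$ containing $T$ is $(T^*,T]$ with $T^*=\sup E$; its right endpoint is $T\notin E$, so the identity you assert for every contiguous interval,
\begin{equation*}
f^*(b_k)-f^*(a_k)=f(b_k)-f(a_k),
\end{equation*}
is false there: the left side is $0$ by constancy of $f^*$, while the right side is $f(T)-f(T^*)$, nonzero in general. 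That terminal interval is exactly what carries the discrepancy $f(T)-f^*(T)$ between the total masses $\mu_f([0,T])=f(T)-f(0)$ and $\mu_{f^*}([0,T])=f^*(T)-f(0)$, so it cannot be absorbed into the "every contiguous interval contributes zero" step; it must be split off and its mass computed separately. This is precisely why the paper introduces $T^*$ and works with $I=E^c\setminus(T^*,T]$, for which all endpoints do lie in $E$ and $\mu_f(I)=0$. With that correction your bookkeeping closes: $\mu_f(E)=f(T)-f(0)-\mu_f(I)-\bigl(f(T)-f^*(T)\bigr)=f^*(T)-f(0)$. Finally, the full measure identity $\mu_{f^*}=\mathbf{1}_E\,\mu_f$ that you set up as the target is true but stronger than needed; only equality of the total masses on $E$ enters, so the $\pi$--$\lambda$/outer-regularity difficulty you flag can be sidestepped entirely, as both your own "clean fix" paragraph and the paper's proof in effect do.
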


\begin{proof}
Step $1$. Since $f^{*}$ is an increasing function, we have

\begin{equation*}
 \begin{split}
  f^{*}(T) &= f(0) + \int_{(0,T]} d\mu_{f^{*}}(t)\\
&= f(0) + \mu_{f^{*}} (E) + \mu_{f^{*}}(E^{c}).\\ 
 \end{split}
\end{equation*}

Step $2$. We show that

\begin{equation*}
\mu_{f^{*}}(E^{c}) = 0.
\end{equation*}

Since $E^c$ is an open set, without loss of generality we can assume $E^{c}=(a,b) \subset (0,T)$. So

\begin{equation*}
\mu_{f^{*}}(E^{c}) = f^{*}(b) - f^{*} (a).
\end{equation*}

Assume $f^{*}(b) - f^{*} (a) \neq 0 \quad \Longrightarrow \quad f^{*} (a) < f^{*}(b) $. Take $K \in (f^{*} (a), f^{*} (b))$ and set

\begin{equation*}
t_{0} = \inf \{ t > a : f(t)= K \}.
\end{equation*}

Obviously $a < t_{0} < b $, since $f$ is a continuous function and moreover by the definition of $t_0 $ we have that $f(t_0) = f^{*}(t_0)$. So $t_0 \in E $ which is a contradiction.\\

\vskip0.25cm

Step $3$. We show that
\begin{equation*}
\mu_{f^{*}}(E) = \mu_{f}(E).
\end{equation*}
We know that set $E$ is closed and nonempty. Thus, $T^*=\sup\{t \in E\}\in E$. Clearly $f^*(T)=f^*(T^*)=f(T^*)$. It also holds that $(T^*,T]\subset E^c$ and $I=E^c\backslash (T^*,T]$ is open with measure $\mu_f(I)=\mu_f(E^c)-(f(T)-f(T^*))$.
\begin{equation*}
\begin{split}
f(T)-f(0)&=\mu_f (E)+ \mu_f (E^c)\quad \text{and}\\
f^*(T)-f(0)&= \mu_{f^*}(E)+ \mu_{f^*} (E^c)= \mu_{f^*} (E).
\end{split}
\end{equation*}
It follows that
\begin{equation}
\label{difference}
\mu_{f^*} (E) - \mu_f (E) = \mu_f (I).
\end{equation}
We know that $I$ is an open set and thus can be represented as a countable union of disjoint open intervals i.e.
\begin{equation*}
I=\cup_{n=1}^\infty (a_n,b_n).
\end{equation*}
Note that $a_n,b_n\in E$ as boundary points of $E^c$ and
\begin{equation*}
f(a_n)=f^*(a_n)=f^*(b_n)=f(b_n).
\end{equation*}
Now
\begin{equation}
\label{Ec}
\mu_f (I)= \mu_f (\cup_{n=1}^\infty (a_n,b_n))=\sum_{n=1}^\infty f(b_n)-f(a_n)=0.
\end{equation}
Now we deduce from equations~(\ref{difference})~and~(\ref{Ec}) that
\begin{equation*}
\mu_{f^*} (E) = \mu_f(E)
\end{equation*}

\end{proof}

\vskip0.25cm

When $f$ is an absolutely continuous function of bounded variation, one can give a different proof of equation $($\ref{eq:main}$)$. However, we will use the argument of step $2$ of the proof of theorem \ref{thm:main}. First, we need the following simple lemma.

\begin{lma}\label{lma:abcon}
Let $f:[0,T] \to \mathbb{R}$ be an absolutely continuous function of bounded variation. Then $ f^{*}$ is absolutely continuous function.
\end{lma}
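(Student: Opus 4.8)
\textbf{Proof proposal for Lemma \ref{lma:abcon}.}

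The plan is to exhibit an explicit $L^1$ density for $\mu_{f^*}$ and invoke the fundamental theorem of calculus for the Lebesgue integral (the second cited theorem in Section 2). Since $f$ is absolutely continuous, by that theorem $f' \in L^1(m)$ and $f(t)-f(0)=\int_{[0,t]} f'\,dm$. I claim that $f^*$ is absolutely continuous with density $g(t) := f'(t)\,\mathbf{1}_{E}(t)$, where $E=\{t : f^*(t)=f(t)\}$ as above; note $g \in L^1(m)$ since $|g| \le |f'|$. Equivalently, since $f^*$ is monotone increasing it induces a positive measure $\mu_{f^*}$, and it suffices to show $\mu_{f^*} \ll m$, which gives absolute continuity of $f^*$ by Theorem 2.2 together with monotonicity.

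The key steps, in order, are as follows. First I would reduce to showing $\mu_{f^*}(E^c)=0$ and $\mu_{f^*}$ restricted to $E$ is absolutely continuous. For the first part, I reuse verbatim the argument of Step 2 in the proof of Theorem \ref{thm:main}: $E^c$ is open, write it as a countable disjoint union of open intervals $(a_n,b_n)$, and on each such interval the intermediate value theorem forces $f^*(a_n)=f^*(b_n)$ (otherwise an interior point would lie in $E$), so $\mu_{f^*}(E^c)=\sum_n (f^*(b_n)-f^*(a_n)) = 0$. Consequently, for any Borel set $A$, $\mu_{f^*}(A) = \mu_{f^*}(A\cap E)$. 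Second, on $E$ I would show $\mu_{f^*}$ agrees with $\mu_f$: the Step 3 argument of Theorem \ref{thm:main} (or a localized version of it) gives $\mu_{f^*}(E\cap[0,t]) = \mu_f(E\cap[0,t])$, hence $\mu_{f^*}(A\cap E)=\mu_f(A\cap E)$ for Borel $A$. Third, since $f$ is absolutely continuous, Theorem \ref{thm:smac} gives $\mu_f(B) = \int_B f'\,dm$ for measurable $B$; therefore $\mu_{f^*}(A) = \mu_f(A\cap E) = \int_{A\cap E} f'\,dm = \int_A f'\mathbf{1}_E\,dm$. This exhibits $\mu_{f^*} \ll m$ with $L^1$ density $f'\mathbf{1}_E$, and then Theorem 2.2 yields $f^*(t)-f^*(0) = f^*(t) = \int_{[0,t]} f'\mathbf{1}_E\,dm$, so $f^*$ is absolutely continuous.

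An alternative, more elementary route avoids measure decomposition entirely: estimate $f^*(d)-f^*(c)$ for a subinterval $[c,d]$ directly. Since $f^*$ is increasing, $f^*(d)-f^*(c) = \max_{[0,d]} f - \max_{[0,c]} f$, and this is either $0$ or equals $f(s_d) - \max_{[0,c]}f \le f(s_d) - f(s_c')$ for suitable points $s_c', s_d \in [c,d]$ (with $s_d$ where the new maximum on $[0,d]$ is attained, which must lie in $(c,d]$ when the increment is nonzero). One bounds $|f(s_d)-f(s_c')| \le \int_{s_c'}^{s_d} |f'|\,dm \le \int_c^d |f'|\,dm$. Summing over a finite disjoint collection of subintervals of total length $<\delta$ gives $\sum (f^*(d_k)-f^*(c_k)) \le \int_{\bigcup [c_k,d_k]} |f'|\,dm$, which is small by absolute continuity of the integral $t\mapsto \int_0^t |f'|\,dm$. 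This directly verifies the $\varepsilon$–$\delta$ definition of absolute continuity for $f^*$.

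The main obstacle is the second route's bookkeeping of \emph{where} the running maximum is attained: one must argue carefully that when $f^*(d) > f^*(c)$ the maximizer of $f$ over $[0,d]$ lies in $(c,d]$, and then compare it against the maximizer over $[0,c]$, keeping both points inside a controlled set so that the increments sum against $\int |f'|$ over the union. In the first (measure-theoretic) route the analogous subtlety is hidden in re-deriving the Step 3 identity $\mu_{f^*}(E\cap\cdot)=\mu_f(E\cap\cdot)$ in localized form; this is a direct transcription of the argument already given for Theorem \ref{thm:main}, so I expect it to be routine, and I would favor presenting the measure-theoretic proof for brevity, citing Steps 2 and 3 above.
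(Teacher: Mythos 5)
Your proposal is correct, but both of your routes differ from the paper's argument, which is a one-line chain of domination of measures: $\mu_{f^*}\ll(\mu_f)^+\ll|\mu_f|$ together with the fact that $\mu_f\ll m$ if and only if $|\mu_f|\ll m$; the unstated key point there is the interval inequality $f^*(d)-f^*(c)\le V_f(d)-V_f(c)$, i.e.\ $\mu_{f^*}(I)\le(\mu_f)^+(I)$, which makes $\mu_{f^*}$ dominated by the positive part of $\mu_f$. Your second, elementary route is in effect a quantitative version of exactly this domination and is the cleaner of your two options: the bookkeeping you worry about is not needed, since when $f^*(d)>f^*(c)$ the maximizer $s_d$ of $f$ on $[0,d]$ lies in $(c,d]$ and one may simply take $s_c'=c$, because $f(c)\le f^*(c)$ gives $f^*(d)-f^*(c)\le f(s_d)-f(c)=\int_c^{s_d}f'\,dm\le\int_c^d|f'|\,dm$; summing over disjoint intervals then verifies the $\varepsilon$--$\delta$ definition directly. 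Your first, measure-theoretic route is also sound but heavier than necessary: it re-derives Steps 2 and 3 of Theorem \ref{thm:main} in localized form and needs a uniqueness-of-measures ($\pi$--$\lambda$) argument to pass from $\mu_{f^*}(E\cap[0,t])=\mu_f(E\cap[0,t])$ to equality on all Borel sets; what it buys is an explicit density $f'\mathbf{1}_E$ for $\mu_{f^*}$, which essentially proves Theorem 3.4 of the paper along the way rather than just the lemma. Two small slips to fix if you write it up: $f^*(0)=f(0)$ need not vanish, so do not replace $f^*(t)-f^*(0)$ by $f^*(t)$; and state explicitly that $\mu_{f^*}$ is a positive measure (from monotonicity of $f^*$) when you use $\mu_{f^*}(E^c)=0$ to restrict all sets to $E$.
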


\begin{proof}
This follows from $\mu_{f^*} \ll { (\mu_f)}^+ \ll |\mu_f|$. Moreover, $\mu_f \ll m$ if and only if $|\mu_f| \ll m  $.
\end{proof}

\begin{thm}
Let $f:[0,T] \to \mathbb{R}$ be an absolutely continuous function of bounded variation function. Then
\begin{equation*}
f^{*}(T) = f(0) + \int _{0}^{T} \textbf{1}_{ \{ f^{*}(t) = f(t)\} } d\mu_f(t).
\end{equation*}
\end{thm}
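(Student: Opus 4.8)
The plan is to exploit absolute continuity of both $f$ and $f^*$ (the latter from Lemma~\ref{lma:abcon}) so that every Lebesgue--Stieltjes integral becomes an ordinary Lebesgue integral of a derivative via Theorem~\ref{thm:smac}. Concretely, I would write
\begin{equation*}
f^*(T) = f(0) + \int_{[0,T]} (f^*)' \, dm,
\end{equation*}
and then argue that $(f^*)'(t) = f'(t)\,\mathbf{1}_{E}(t)$ for almost every $t$, where $E = \{t : f^*(t) = f(t)\}$. Granting that identity, Theorem~\ref{thm:smac} applied to $f$ gives $\int_{[0,T]} f'\mathbf{1}_E \, dm = \mu_f(E) = \int_0^T \mathbf{1}_E \, d\mu_f$, which is exactly \eqref{eq:main}.

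So the work reduces to the a.e.\ identity $(f^*)' = f'\mathbf{1}_E$. First, on the open set $E^c$: by the argument of Step~2 of the proof of Theorem~\ref{thm:main}, $f^*$ is constant on each connected component of $E^c$ (if $f^*(a) < f^*(b)$ on a component $(a,b)$, the intermediate value theorem produces a point $t_0 \in (a,b) \cap E$, a contradiction). Hence $(f^*)' = 0$ a.e.\ on $E^c$, which matches $f'\mathbf{1}_E = 0$ there. Second, on $E$: at a point $t \in E$ that is additionally a point of density $1$ of $E$ and at which both $f$ and $f^*$ are differentiable — this excludes only a null set, since $m(E \setminus E^{(1)}) = 0$ by the Lebesgue density theorem — I claim $(f^*)'(t) = f'(t)$. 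Indeed $f^* \ge f$ everywhere with equality at $t$, so $f^* - f$ attains a local minimum value $0$ at $t$ relative to its value there; since $f^*-f$ is differentiable at $t$ this forces $(f^*)'(t) = f'(t)$ whenever $t$ is interior to $[0,T]$, and one handles $t \in \{0,T\}$ by a one-sided version of the same remark (or simply discards the two endpoints as a null set). Actually the density hypothesis is not even needed for this half; $f^* \ge f$ with equality at $t$ and differentiability of both at $t$ suffice.

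The main obstacle is the converse direction, i.e.\ controlling $(f^*)'$ on $E$ from below so that it is not strictly larger than $f'$ — but this is automatic from the elementary fact just used, so really the only genuinely delicate point is justifying that $(f^*)' = 0$ a.e.\ on $E^c$ rather than merely on each component (which is immediate once we know $f^*$ is locally constant there and $E^c$ is open, hence a countable union of such components, and the exceptional boundary set is countable hence null). I would therefore organize the proof as: (i) recall $f,f^*$ absolutely continuous and invoke Theorem~\ref{thm:smac} twice; (ii) prove $f^*$ constant on components of $E^c$ by the Step~2 argument; (iii) prove $(f^*)' = f'$ a.e.\ on $E$ by the local-minimum-of-$f^*-f$ observation; (iv) combine to get $(f^*)' = f'\mathbf 1_E$ a.e.\ and integrate. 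One should note in passing that, unlike Theorem~\ref{thm:main}, this proof visibly uses the Lebesgue measure and the a.e.\ differentiability structure, which is exactly why it does not transfer to fractional Brownian motion.
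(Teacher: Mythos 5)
Your proposal is correct and follows essentially the same route as the paper: convert everything to Lebesgue integrals of derivatives via Theorem~\ref{thm:smac}, Lemma~\ref{lma:abcon} and the fundamental theorem of calculus, show $(f^*)'=0$ a.e.\ on $E^c$ by reusing Step~2 of Theorem~\ref{thm:main}, and use $(f^*)'=f'$ a.e.\ on $E$. The only difference is that you explicitly justify this last identity via the local-minimum (Fermat) observation applied to $f^*-f\ge 0$, a step the paper's chain of equalities relies on but leaves implicit.
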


\begin{proof}
Put
\begin{align*}
\Lambda _{1} &= \{ t  \in [0,T] : f^{'} \text{ exists at } t \} \\
\Lambda_{2} &= \{ t  \in [0,T] : {(f^{*})}^{'} \text{ exists at } t \}.
\end{align*}

\begin{equation*}
 \begin{split}
  \int _{0}^{T} \textbf{1}_{ \{ f^{*}(t) = f(t)\} } d\mu_f (t) &= \int_{E \cap \Lambda_1} f^{'} dm \qquad \text{Theorem } \ref{thm:smac}\\
& = \int_{E \cap \Lambda_2}  {(f^{*})}^{'} dm \\
& = \int_{[0,T] \cap \Lambda_2} {(f^{*})}^{'}  dm - \int_{[0,T] \cap E^{c} \cap \Lambda_2 } {(f^{*})}^{'} dm \\
&= \int_{[0,T] \cap \Lambda_2} {(f^{*})}^{'}  dm = \int_{[0,T]} {(f^{*})}^{'}  dm \qquad \text{ Step } 2\\
&= f^{*}(T) - f^{*}(0) = f^{*}(T) - f(0) \qquad \text{Lemma } \ref{lma:abcon}.\\
 \end{split}
\end{equation*}

\end{proof}

\begin{rmk}

Note that the continuity assumption is essential. A jump from below the running maximum to a new maximum value would destroy the representation of equation~(\ref{eq:main}).

\end{rmk}

\vskip0.25cm

\subsection{The case of fractional Brownian motion}

Assume $B=\{ B_t \}_{t \in [0,T]}$ be a fractional Brownian motion with Hurst parameter $H \in ( 0,1)$. We denote by $M= \{ M_t \}_{t \in [0,T]}$ the running maximum of fractional Brownian motion, i.e.

\begin{equation*}
M_t := \max_{0  \le s \le t} B_s \qquad t \in [0,T].
\end{equation*}

We start  with the following fact on running maximum.
\begin{lma} \label{lma:low}
For all $t \in (0,T]$,
\begin{equation*}
\mathbb{P} \{ M_t = 0 \} = 0.
\end{equation*}
\end{lma}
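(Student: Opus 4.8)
The plan is to bound $\mathbb{P}\{M_t = 0\}$ from above by the probability that $B$ is nonpositive at finitely many well-separated time points, and then to exploit that the values of $B$ at very different time scales are almost uncorrelated. Since $B_0 = 0$, we have $M_t \ge 0$ and $\{M_t = 0\} = \{B_s \le 0 \text{ for all } s \in [0,t]\}$, so for any points $0 < s_1 < \cdots < s_k \le t$ one trivially has $\mathbb{P}\{M_t = 0\} \le \mathbb{P}\{B_{s_1} \le 0, \ldots, B_{s_k} \le 0\}$, and $(B_{s_1}, \ldots, B_{s_k})$ is a centered Gaussian vector.

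First I would record that for $a > b > 0$ the explicit covariance gives $\mathrm{corr}(B_a, B_b) = \bigl(1 + (b/a)^{2H} - (1 - b/a)^{2H}\bigr)\big/\bigl(2(b/a)^{H}\bigr)$, which is positive and, by a Taylor expansion of $(1-\delta)^{2H}$, tends to $0$ as $\delta = b/a \to 0^{+}$. Consequently, taking $s_j = t\,\varepsilon^{\,k-j}$ for $j = 1, \ldots, k$ with $\varepsilon$ small enough, all $\binom{k}{2}$ pairwise correlations of $(B_{s_1}, \ldots, B_{s_k})$ can be made at most any prescribed $\eta > 0$. Next I would apply Slepian's inequality to compare with the equicorrelated Gaussian vector $Z_j^\eta = \sqrt{\eta}\,G_0 + \sqrt{1-\eta}\,G_j$, with $G_0, G_1, \ldots, G_k$ i.i.d.\ standard normal: since the normalized fBm vector has all correlations $\le \eta$,
\[
\mathbb{P}\{B_{s_1} \le 0, \ldots, B_{s_k} \le 0\} \le \mathbb{P}\{Z_1^\eta \le 0, \ldots, Z_k^\eta \le 0\} = \mathbb{E}\bigl[\Phi\bigl(-\sqrt{\eta/(1-\eta)}\,G_0\bigr)^{k}\bigr],
\]
and the right-hand side converges to $\Phi(0)^k = 2^{-k}$ as $\eta \to 0$ by bounded convergence. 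Hence $\mathbb{P}\{M_t = 0\} \le 2^{-k}$ for every $k \in \N$, and letting $k \to \infty$ gives $\mathbb{P}\{M_t = 0\} = 0$.

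The step I expect to be the main (though still elementary) obstacle is the uniform control of the covariance: showing that $\mathrm{corr}(B_a, B_b) \to 0$ fast enough in $b/a$ to handle all $\binom{k}{2}$ pairs simultaneously; once this is in place, the rest is routine. One could instead replace the Slepian step by weak convergence of the normalized vector to $N(0, I_k)$ (the orthant $(-\infty,0]^k$ being a continuity set for the limit), or, most briefly, simply quote the law of the iterated logarithm for fractional Brownian motion at the origin, $\limsup_{s \downarrow 0} B_s / \sqrt{2\, s^{2H} \log\log(1/s)} = 1$ a.s., which already forces $M_t > 0$ a.s.\ for every $t > 0$.
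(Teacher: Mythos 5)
Your argument is correct, and your main route is genuinely different from the paper's. The paper proves the lemma in three lines by citing Arcones's law of the iterated logarithm for fBm at the origin: on $\{M_{t_0}=0\}$ one would have $\limsup_{\epsilon\to 0^+} B_\epsilon/\sqrt{2\epsilon^{2H}\log\log(1/\epsilon)}\le 0$, contradicting the almost sure value $1$ --- which is precisely the shortcut you offer in your closing sentence. Your Slepian argument also works, and the one step you flag as the potential obstacle does go through: with $\delta=b/a$ one checks (splitting into the cases $2H\le 1$, where $1-(1-\delta)^{2H}\le\delta^{2H}$ by concavity of $x\mapsto x^{2H}$, and $2H\ge 1$, where $1-(1-\delta)^{2H}\le 2H\delta$) that $\mathrm{corr}(B_a,B_b)\le \delta^{H}+H\,\delta^{1-H}$, so the geometric spacing $s_j=t\,\varepsilon^{k-j}$ pushes all $\binom{k}{2}$ pairwise correlations below any prescribed $\eta$ simultaneously; the Slepian comparison with the equicorrelated vector and the resulting bound $\mathbb{P}\{M_t=0\}\le 2^{-k}$ for every $k$ then follow exactly as you describe. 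The trade-off: the paper's proof is shorter but rests on a nontrivial external result (the LIL for Gaussian processes), whereas yours is self-contained modulo Slepian's inequality, gives a quantitative estimate, and applies verbatim to any centered Gaussian process vanishing at $0$ whose correlations across well-separated time scales decay. Both arguments cover the full range $H\in(0,1)$, as the lemma requires.
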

\begin{proof}
By contrary assume that there exists a $ t_0 \in (0,T]$ such that $\P (A) >0$, where $A= \{ \omega \in \Omega : \ M_{t_0}(\omega) = 0 \}$. Then 
\begin{equation*}
\limsup_{\epsilon \to 0^+} \frac{B_\epsilon }{\sqrt{2\epsilon^{2H}\log \log (\frac{1}{\epsilon})}} \le 0 \quad \text{for all } \omega \in A.
\end{equation*}

Now, this is a contradiction with the \textit{law of iterated logarithm} for fractional Brownian motion \cite{ar}: for all $t\ge 0$, almost surely
\begin{equation*}
\limsup_{\epsilon \to 0^+} \frac{B_{t+\epsilon} - B_t}{\sqrt{2\epsilon^{2H}\log \log (\frac{1}{\epsilon})}} = 1.
\end{equation*}
\end{proof}

First, we note that the set

\begin{equation}\label{eq:set}
E = \{ (t,\omega) \in [0,T]\times \Omega : M_t (\omega) = B_t (\omega) \}
\end{equation}

is product-measurable: $E \in \mathcal{B}([0,T])\otimes \mathcal{F}$, because the process $B$ is separable. We denote the sections of $E$ by

\begin{equation*}
E_t := \{ \omega \in \Omega : (t,\omega) \in E \}, \qquad E_{\omega}:= \{ t \in [0,T] : (t,\omega) \in E \}.
\end{equation*}

 For $t \in (0,T]$ we have

\begin{equation*}
\mathbb{P} \{ E_t \} = \mathbb{P} \{ B_t - B_s \ge 0 : \forall s \in [0,t] \} = \mathbb{P} \{ B_{t-s} - B_t \le 0 : \forall s \in [0,t] \} = 0
\end{equation*}

because of the fact that the process $\tilde{B} = \{ \tilde{B_s} \} = \{  B_{t-s} - B_t \}_{s \in [0,t]}$ is a fractional Brownian motion and lemma \ref{lma:low}. Therefore by Fubini's theorem we have

\begin{equation*}
 \int_{\Omega} m (E_{\omega}) d\mathbb{P}= (m \times \mathbb{P}) (E) = \int_0^T \mathbb{P}(E_t) dt = 0
\end{equation*}

which implies that 

\begin{equation*}
 m(E_{\omega}) = 0 \quad \text{almost surely}.
\end{equation*}

\begin{thm}\label{thm:sections}
For the set $E$ defined by equation~(\ref{eq:set}) we have
\begin{equation*}
\mathbb{P} \{ E_t \}= 0 \quad \forall t \in (0,T], \qquad m(E_{\omega}) = 0 \quad \text{almost surely}.
\end{equation*}
\end{thm}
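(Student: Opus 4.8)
The plan is to verify the two claimed statements separately, since both essentially repackage computations carried out in the paragraph immediately preceding the theorem statement. In fact the theorem is a summary of that discussion, so the proof is mostly a matter of assembling the pieces cleanly.

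First I would establish the pointwise-in-$t$ statement $\mathbb{P}\{E_t\}=0$ for every $t\in(0,T]$. By definition $E_t=\{\omega: M_t(\omega)=B_t(\omega)\}$, and since $M_t=\max_{0\le s\le t}B_s\ge B_t$ always, we have $E_t=\{B_t\ge B_s\ \text{for all } s\in[0,t]\}$. The key observation is the time-reversal symmetry of fractional Brownian motion on $[0,t]$: the process $\tilde B_s:=B_{t-s}-B_t$, $s\in[0,t]$, is again a fractional Brownian motion (same Hurst index), because it is centered Gaussian, starts at $0$, and a direct check of the covariance gives $\mathbb{E}(\tilde B_s\tilde B_r)=\tfrac12(s^{2H}+r^{2H}-|s-r|^{2H})$. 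Under this reparametrization the event $E_t$ becomes $\{\tilde B_s\le 0\ \text{for all }s\in[0,t]\}$, i.e. $\{\max_{0\le s\le t}\tilde B_s\le 0\}=\{\tilde M_t\le 0\}=\{\tilde M_t=0\}$ (the last equality because $\tilde M_t\ge\tilde B_0=0$). Lemma~\ref{lma:low}, applied to the fractional Brownian motion $\tilde B$, gives $\mathbb{P}\{\tilde M_t=0\}=0$, hence $\mathbb{P}\{E_t\}=0$.

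Next I would deduce the almost-sure statement $m(E_\omega)=0$ by Fubini's theorem. One first needs $E$ to be product-measurable, $E\in\mathcal{B}([0,T])\otimes\mathcal{F}$; this follows from separability of $B$ (so that $M_t=\sup_{s\in[0,t]\cap D}B_s$ for a countable dense $D$ is jointly measurable, and $E=\{M=B\}$). Then $\int_\Omega m(E_\omega)\,d\mathbb{P}=(m\times\mathbb{P})(E)=\int_0^T\mathbb{P}(E_t)\,dt$, and since the integrand vanishes for all $t\in(0,T]$ by the first part (and the single point $t=0$ contributes nothing to the Lebesgue integral), the whole integral is $0$. As $m(E_\omega)\ge 0$, this forces $m(E_\omega)=0$ for $\mathbb{P}$-almost every $\omega$.

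I do not expect a serious obstacle here: the main content has already been distilled into Lemma~\ref{lma:low}, which itself rests on the law of the iterated logarithm for fractional Brownian motion. The only points requiring a little care are (i) checking the covariance identity that makes $\tilde B$ a fractional Brownian motion, and (ii) justifying joint measurability of $E$ via separability so that Fubini applies — both routine. One could even phrase the proof as a one-line reference back to the displayed computations preceding the statement, but it is cleaner to spell out the time-reversal argument and the Fubini step explicitly.
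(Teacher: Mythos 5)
Your proof is correct and follows essentially the same route as the paper: the identity $\mathbb{P}\{E_t\}=\mathbb{P}\{\tilde M_t=0\}=0$ via the time-reversed fractional Brownian motion $\tilde B_s=B_{t-s}-B_t$ together with Lemma~\ref{lma:low}, and then Fubini's theorem on the product-measurable set $E$ to get $m(E_\omega)=0$ almost surely. Indeed, the theorem in the paper is just a summary of the computation displayed immediately before it, which is exactly the argument you reconstructed.
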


Now we have the following theorem.

\begin{thm}
Let $\{ \pi _n \}$ be any sequence of partitions of the interval $[0,T]$ such that $\Vert \pi_n \Vert \to 0  \quad \text{as} \quad n \to \infty$, and 
let $B=\{ B_t \}_{t \in [0,T]}$ be a fractional Brownian motion with Hurst parameter $H \in (0,1)$. Then
\begin{equation*}
\begin{split}
\int_{0}^{T} \textbf{1}_{ \{ B_t = M_t \} } dB_t :&= \lim_{n \to \infty} \sum_{t^{n}_{i} \in \pi_n} \textbf{1}_{ \{ B_{t^{n}_{i-1}} = M_{t^{n}_{i-1}} \} } (B_{t^{n}_{i}} - B_{t^{n}_{i-1}})= 0\\
\end{split}
\end{equation*}

almost surely.

\end{thm}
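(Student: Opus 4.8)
The plan is to show that the Riemann--Stieltjes sums converge to zero almost surely by exploiting Theorem~\ref{thm:sections}, which tells us that almost surely the Lebesgue measure of the section $E_\omega = \{ t : B_t(\omega) = M_t(\omega)\}$ is zero, together with the pathwise H\"older regularity of $B$. Fix $\omega$ outside the null set where $m(E_\omega) > 0$ and where the trajectory of $B$ fails to be $\gamma$-H\"older for some (hence every) $\gamma < H$; call such $\omega$ good. For a good $\omega$ the summands $\textbf{1}_{\{ B_{t^n_{i-1}} = M_{t^n_{i-1}}\}} (B_{t^n_i} - B_{t^n_{i-1}})$ are nonzero only for those indices $i$ with $t^n_{i-1} \in E_\omega$, and for such $i$ the increment $|B_{t^n_i} - B_{t^n_{i-1}}|$ is at most $C_\omega \Vert \pi_n \Vert^\gamma$.

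First I would bound the absolute value of the $n$-th Riemann sum by
\begin{equation*}
\sum_{i : \, t^n_{i-1} \in E_\omega} |B_{t^n_i} - B_{t^n_{i-1}}| \le C_\omega \Vert \pi_n \Vert^{\gamma - 1} \sum_{i : \, t^n_{i-1} \in E_\omega} (t^n_i - t^n_{i-1}),
\end{equation*}
so the task reduces to controlling $\sum_{i : t^n_{i-1} \in E_\omega} (t^n_i - t^n_{i-1})$, the total length of the partition intervals whose left endpoint lies in $E_\omega$. Since $E_\omega$ is a closed set of Lebesgue measure zero, I would cover it by a finite union of open intervals of total length less than an arbitrary $\eta > 0$; this is possible because $E_\omega$ is compact (it is closed in $[0,T]$) and $m(E_\omega)=0$. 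For $n$ large enough that $\Vert \pi_n \Vert$ is small compared to the gaps between these covering intervals, each partition interval with left endpoint in $E_\omega$ is contained in the $\Vert\pi_n\Vert$-enlargement of the cover, whose total length is at most $\eta + (\text{number of covering intervals})\cdot 2\Vert\pi_n\Vert$. Hence $\sum_{i : t^n_{i-1} \in E_\omega} (t^n_i - t^n_{i-1}) \to 0$ (in fact is eventually $\le 2\eta$) as $n \to \infty$.

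Here lies the one subtlety, and the main obstacle: the crude bound above carries the factor $\Vert\pi_n\Vert^{\gamma-1}$, which blows up since $\gamma < 1$, so I cannot simply multiply a vanishing length by a bounded factor. The fix is to choose $\gamma \in (1/2, 1)$ when $H > 1/2$ (using $\gamma$-H\"older continuity with $\gamma$ close to $H$), but for general $H \in (0,1)$ this is not available, so instead I would argue more carefully: replace the uniform H\"older bound by the observation that, on the covering intervals, the increments telescope. Concretely, grouping consecutive partition points lying in one covering interval $(c,d)$, the sum of $|B_{t^n_i}-B_{t^n_{i-1}}|$ over that block is controlled by the oscillation of $B$ on the $\Vert\pi_n\Vert$-enlargement of $(c,d)$, which by uniform continuity of the path tends to zero as the length of the interval and $\Vert\pi_n\Vert$ shrink; summing the finitely many such oscillations and letting first $n\to\infty$ and then $\eta\to 0$ gives that the Riemann sums tend to $0$. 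Since this holds for every good $\omega$ and the bad set is null, the convergence is almost sure, which is the claim. An alternative, cleaner route for $H>1/2$ is to invoke Theorem~\ref{t:n-r}: the generalized Lebesgue--Stieltjes integral $\int_0^T \textbf{1}_{E_\omega}(t)\, dB_t$ is dominated by $\Vert \textbf{1}_{E_\omega}\Vert_{2,\beta}\,\Vert B\Vert_{1,1-\beta}$ up to a constant, and $\Vert \textbf{1}_{E_\omega}\Vert_{2,\beta}=0$ because $m(E_\omega)=0$ makes every integral in the $W^\beta_2$ seminorm vanish; combined with Remark~\ref{rmk:coinside} identifying this integral with the Riemann--Stieltjes limit, one concludes directly. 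I would present the H\"older/covering argument as the primary proof since it works for all $H$, and mention the Besov-norm argument as a remark.
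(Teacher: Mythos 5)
There is a genuine gap, and it sits exactly where you flagged the ``one subtlety.'' Your proposed fix --- grouping the partition intervals whose left endpoints fall into one covering interval $(c,d)$ and bounding the block by the oscillation of $B$ on an enlargement of $(c,d)$ --- does not work, for two separate reasons. First, the oscillation controls only the \emph{telescoped} sum $|B_{t^n_j}-B_{t^n_k}|$, not the sum of absolute increments $\sum |B_{t^n_i}-B_{t^n_{i-1}}|$ over the block; the latter is a first-variation sum, and fractional Brownian motion has infinite variation on every interval, so this quantity blows up as $\Vert\pi_n\Vert\to 0$ no matter how short $(c,d)$ is. Second, even the telescoping identity is unavailable, because inside a covering interval the indicator $\mathbf{1}_{\{t^n_{i-1}\in E_\omega\}}$ selects a non-consecutive subcollection of increments ($E_\omega$ has measure zero but the covering intervals do not lie inside $E_\omega$), so the selected increments do not collapse to a difference of two values of $B$. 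The fallback via Theorem~\ref{t:n-r} and Remark~\ref{rmk:coinside} also does not rescue the \emph{Riemann-sum} statement: Z\"ahle's coincidence result requires the integrand to be H\"older continuous with exponent summing with that of $B$ to more than $1$, and $\mathbf{1}_{E_\omega}$ is an indicator function, not H\"older continuous; it only yields the (separate) theorem about the generalized Lebesgue--Stieltjes integral, and only for $H>\tfrac12$, whereas the statement here is for all $H\in(0,1)$.

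The paper's proof is far simpler and uses the \emph{other} half of Theorem~\ref{thm:sections}, which you did not touch: the $t$-sections rather than the $\omega$-sections. The partitions $\pi_n$ are a fixed deterministic sequence, so the points $t^n_{i-1}$ form a countable deterministic set of times, and for each fixed $t>0$ one has $\P\{B_t=M_t\}=\P(E_t)=0$. Taking the countable union of these null events, almost surely every indicator $\mathbf{1}_{\{B_{t^n_{i-1}}=M_{t^n_{i-1}}\}}$ with $t^n_{i-1}>0$ vanishes for every $n$, so the whole sum reduces (at most) to the single term at $t^n_0=0$, namely $B_{t^n_1}$, which tends to $0$ by continuity of the paths. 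No H\"older regularity, covering argument, or Besov estimate is needed, and the argument works for every $H\in(0,1)$. The lesson is that the randomness of the set $E_\omega$ is irrelevant here precisely because the evaluation points are deterministic; your approach treats the problem as a pathwise integration question, which is genuinely harder and, as it stands, not closed.
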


\begin{proof}

Put $\xi_n := \sum_{t^{n}_{i} \in \pi_n} \textbf{1}_{ \{ B_{t^{n}_{i-1}} = M_{t^{n}_{i-1}} \} } (B_{t^{n}_{i}} - B_{t^{n}_{i-1}})$. Then for every $\epsilon > 0$,

\begin{equation*}
\begin{split}
\mathbb{P} \{  |\xi_n| > \epsilon \} \le \mathbb{P} \{  B_{t^{n}_{i}} = M_{t^{n}_{i}}, \text{for some} \quad 0 < i \le k(n) \} = 0  
\end{split}
\end{equation*}

by theorem $\ref{thm:sections}$.

\end{proof}

\begin{corl}

Let $\{ \pi _n \}$ be any sequence of partitions of the interval $[0,T]$ such that $\Vert \pi_n \Vert \to 0  \quad \text{as} \quad n \to \infty$. Then the representation 

\begin{equation*}
M_T = B_0 + \int_{0}^{T} \textbf{1}_{ \{ B_t = M_t \} } dB_t
\end{equation*}

does not hold, where the integral in the right hand side is understood as limit of Riemann-Stieltjes sums over partitions $\pi_n$ almost surely.

\end{corl}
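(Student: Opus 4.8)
The plan is to derive the corollary directly from the preceding theorem by a short contradiction argument. Suppose, for some sequence of partitions $\{\pi_n\}$ with $\Vert \pi_n \Vert \to 0$, the representation
\begin{equation*}
M_T = B_0 + \int_{0}^{T} \textbf{1}_{ \{ B_t = M_t \} } dB_t
\end{equation*}
held, with the integral interpreted as the a.s. limit of the Riemann--Stieltjes sums $\xi_n$ along $\pi_n$. By the previous theorem that limit is $0$ almost surely, so we would be forced to conclude $M_T = B_0 = 0$ almost surely. But this contradicts Lemma~\ref{lma:low} applied at $t = T$, which gives $\mathbb{P}\{M_T = 0\} = 0$; indeed $M_T > 0$ almost surely, so $M_T = 0$ cannot hold on a set of full measure.

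The key steps, in order, are: first, invoke the preceding theorem to identify the candidate value of the right-hand side as $B_0 + 0 = 0$ a.s.; second, invoke Lemma~\ref{lma:low} (with $t_0 = T$) to note $\mathbb{P}\{M_T = 0\} = 0$, i.e.\ $M_T \neq 0$ a.s.; third, observe that since a representation would force the a.s.\ equality $M_T = 0$, and the two events $\{M_T = 0\}$ and $\{M_T \neq 0\}$ are complementary with the latter having probability one, no such representation can hold for any admissible sequence of partitions.

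I do not expect any real obstacle here: the content has already been established in Theorem~\ref{thm:sections}, the theorem on vanishing of the Riemann sums, and Lemma~\ref{lma:low}. The only point requiring a little care is the logical quantifier structure — the statement asserts failure for \emph{every} admissible partition sequence, and this is exactly what the universally-quantified preceding theorem delivers, so the contradiction argument goes through uniformly in $\{\pi_n\}$ rather than only for some cleverly chosen sequence. One might also add a remark contrasting this with Theorem~\ref{thm:main}: for continuous bounded variation functions the indicator integral reproduces the running maximum, whereas for fractional Brownian motion the analogous pathwise (Riemann--Stieltjes) integral collapses to zero precisely because, by Theorem~\ref{thm:sections}, the contact set $E_\omega = \{t : B_t = M_t\}$ has Lebesgue measure zero and its time-sections $E_t$ are null, so the integrand is a.s.\ zero at every partition point.
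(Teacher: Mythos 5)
Your proposal is correct and is precisely the argument the paper intends: the corollary is stated without proof as an immediate consequence of the preceding theorem (Riemann sums vanish a.s.) combined with Lemma~\ref{lma:low} at $t=T$, which gives $M_T>0$ almost surely. Nothing is missing, and your remark about the quantifier structure (the preceding theorem already holds for every admissible partition sequence) is the right point of care.
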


\begin{thm}

Let $B=\{ B_t \}_{t \in [0,T]}$ be a fractional Brownian motion with Hurst parameter $H> \frac{1}{2}$. Then the integral

\begin{equation*}
\int_{0}^{T} \textbf{1}_{ \{ B_t = M_t \} } dB_t 
\end{equation*}

can be understood in the sense of generalized Lebesgue-Stieltjes integral and actually is equal to $0$ almost surely.

\end{thm}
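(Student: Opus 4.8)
The plan is to realize $\int_0^T \textbf{1}_{\{B_t=M_t\}}\,dB_t$ as a generalized Lebesgue--Stieltjes integral via Proposition~\ref{pr:n-r} and to show that it vanishes by means of the estimate in Theorem~\ref{t:n-r}. Since $H>\frac12$, fix a parameter $\beta$ with $1-H<\beta<H$. Because $1-\beta<H$, Remark~\ref{r:rmk1} together with the Kolmogorov continuity theorem shows that almost surely the trajectory $t\mapsto B_t(\omega)$ belongs to $W^{1-\beta}_1([0,T])$, so that $\Vert B_{\cdot}(\omega)\Vert_{1,1-\beta}<\infty$ almost surely. For fixed $\omega$ write $f_\omega(t):=\textbf{1}_{\{B_t(\omega)=M_t(\omega)\}}=\textbf{1}_{E_\omega}(t)$, with $E_\omega$ the section appearing in Theorem~\ref{thm:sections}; this is a Borel function of $t$ since $E$ is product-measurable. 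It then suffices to prove that for almost every $\omega$ one has $f_\omega\in W^\beta_2([0,T])$ and in fact $\Vert f_\omega\Vert_{2,\beta}=0$.

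To establish this, recall from Theorem~\ref{thm:sections} that $m(E_\omega)=0$ almost surely, and fix such an $\omega$. Then $f_\omega=0$ for $m$-a.e.\ $t$, so the first term $\int_0^T |f_\omega(s)|\,s^{-\beta}\,ds$ in $\Vert f_\omega\Vert_{2,\beta}$ equals $0$. For the second term, observe that the integrand $(u,s)\mapsto |f_\omega(u)-f_\omega(s)|\,(s-u)^{-1-\beta}\textbf{1}_{\{0\le u<s\le T\}}$ is a nonnegative measurable function on $[0,T]^2$ whose support is contained in $(E_\omega\times[0,T])\cup([0,T]\times E_\omega)$, a set of $(m\times m)$-measure zero. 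Hence the integrand vanishes $(m\times m)$-almost everywhere, and by Tonelli's theorem the iterated integral $\int_0^T\!\int_0^s |f_\omega(u)-f_\omega(s)|\,(s-u)^{-1-\beta}\,du\,ds$ is also $0$. Therefore $\Vert f_\omega\Vert_{2,\beta}=0$, and in particular $f_\omega\in W^\beta_2([0,T])$.

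Putting things together, for almost every $\omega$ we have $f_\omega\in W^\beta_2([0,T])$ and $B_{\cdot}(\omega)\in W^{1-\beta}_1([0,T])$, so Proposition~\ref{pr:n-r} applies and the generalized Lebesgue--Stieltjes integral $\int_0^T f_\omega\,dB_{\cdot}(\omega)$ is well defined. By the estimate of Theorem~\ref{t:n-r},
\begin{equation*}
\left|\int_0^T \textbf{1}_{\{B_t=M_t\}}\,dB_t\right|\;\le\;\frac{1}{\Gamma(\beta)}\,\Vert f_\omega\Vert_{2,\beta}\,\Vert B_{\cdot}(\omega)\Vert_{1,1-\beta}\;=\;0,
\end{equation*}
where the product is genuinely $0$ because the second factor is a.s.\ finite. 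Hence the integral equals $0$ almost surely.

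The only delicate point in this argument is checking that the double-integral seminorm in the definition of $W^\beta_2$ vanishes; this is precisely where Theorem~\ref{thm:sections}---the fact that the random level set $E_\omega=\{t:B_t=M_t\}$ is Lebesgue-null---does all the work, while everything else is a direct application of the pathwise integration theory recalled earlier. Note also that one must keep the roles of the two exponents straight: the integrand lives in $W^{\beta}_2$ and the integrator $B$ in $W^{1-\beta}_1$, which is what forces the choice $1-H<\beta<H$ and hence the restriction $H>\frac12$.
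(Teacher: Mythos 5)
Your proof is correct and follows essentially the same route as the paper: invoke Theorem~\ref{thm:sections} to get $m(E_\omega)=0$ a.s., conclude $\Vert \textbf{1}_{E_\omega}\Vert_{2,\beta}=0$ for a suitable $\beta\in(1-H,H)$, and then apply Proposition~\ref{pr:n-r} together with the estimate of Theorem~\ref{t:n-r}. You merely spell out the details the paper leaves implicit (the Tonelli argument for the double-integral seminorm and the membership $B\in W^{1-\beta}_1$), which is a welcome addition but not a different proof.
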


\begin{proof} Let $f(t)= \textbf{1}_{ \{ B_t = M_t \} } $. Then according to theorem~\ref{thm:sections} we have

 \begin{equation*}
 \Vert f \Vert_{2,\beta} = 0 \quad \text{for every} \quad \beta \in (1-H,\frac{1}{2}).
 \end{equation*}

Now, the claim follows by proposition~\ref{pr:n-r} and theorem~\ref{t:n-r}.

\end{proof}

\begin{corl}

Let $B=\{ B_t \}_{t \in [0,T]}$ be a fractional Brownian motion with Hurst parameter $H> \frac{1}{2}$. Then the representation

\begin{equation*}
M_T = B_0 + \int_{0}^{T} \textbf{1}_{ \{ B_t = M_t \} } dB_t
\end{equation*}

does not hold, where the integral in the right hand side is understood as generalized Lebesgue-Stieltjes integral.

\end{corl}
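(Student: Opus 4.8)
The plan is to argue by contradiction, combining the preceding theorem (which evaluates the generalized Lebesgue--Stieltjes integral) with Lemma~\ref{lma:low} (which rules out the maximum vanishing). First I would record that $B_0 = 0$ almost surely by definition of fractional Brownian motion, so the right-hand side of the claimed representation is exactly the generalized Lebesgue--Stieltjes integral $\int_0^T \textbf{1}_{\{B_t = M_t\}}\, dB_t$. To see that this object is meaningful in the sense of Proposition~\ref{pr:n-r}, I would fix $\beta \in (1-H,\tfrac12)$; then $1-\beta \in (\tfrac12, H)$, and since almost every trajectory of $B$ is Hölder continuous of every order $\gamma < H$, Remark~\ref{r:rmk1} gives that almost surely $B \in W^{1-\beta}_1([0,T])$, while the integrand $f(t) = \textbf{1}_{\{B_t = M_t\}}$ lies (trivially) in $W^{\beta}_2([0,T])$ because, by Theorem~\ref{thm:sections}, $m(E_\omega)=0$ almost surely and hence $\Vert f \Vert_{2,\beta} = 0$.

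Next I would invoke the previous theorem to conclude that $\int_0^T \textbf{1}_{\{B_t = M_t\}}\, dB_t = 0$ almost surely; indeed Theorem~\ref{t:n-r} bounds this integral in absolute value by $\frac{1}{\Gamma(\beta)}\Vert f\Vert_{2,\beta}\Vert B\Vert_{1,1-\beta} = 0$ on a set of full probability. Therefore, if the representation $M_T = B_0 + \int_0^T \textbf{1}_{\{B_t = M_t\}}\, dB_t$ were to hold almost surely, we would obtain $M_T = 0$ almost surely.

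Finally I would apply Lemma~\ref{lma:low} with $t = T \in (0,T]$, which states $\mathbb{P}\{M_T = 0\} = 0$. This contradicts $M_T = 0$ almost surely, so the representation cannot hold. I do not expect any genuine obstacle here: all the analytic content (product-measurability of $E$, the null-section property, the vanishing of the integral, and the law of the iterated logarithm input behind Lemma~\ref{lma:low}) has already been established, and the argument is simply the juxtaposition of "the integral is zero" with "the maximum is almost surely nonzero." The only point requiring a line of care is confirming that the integrand and integrator land in the dual Besov-type spaces $W^\beta_2$ and $W^{1-\beta}_1$ for a common admissible $\beta$, which is exactly where $H > \tfrac12$ is used.
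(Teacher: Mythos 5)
Your proposal is correct and is essentially the argument the paper intends (the corollary is stated without proof precisely because it is the immediate juxtaposition of the preceding theorem, which gives $\int_0^T \textbf{1}_{\{B_t=M_t\}}\,dB_t = 0$ almost surely, with Lemma~\ref{lma:low} applied at $t=T$ and the fact that $B_0=0$). Your extra verification that the integrand and integrator lie in $W^{\beta}_2$ and $W^{1-\beta}_1$ for a common $\beta\in(1-H,\tfrac12)$ simply retraces the proof of that theorem and is consistent with the paper.
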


\begin{rmk}
It is not clear whether it is possible to have some explicit representation for the maximum random variable of fractional Brownian motion analogously to theorem \ref{thm:maxbm}.
\end{rmk}

\end{document}